\newtheorem{Theorem}{Theorem}[section]
\newtheorem{Lemma}[Theorem]{Lemma}
\newtheorem{Corollary}[Theorem]{Corollary}
\newtheorem{Proposition}[Theorem]{Proposition}
\newtheorem{Exm}{Example}[section]
\newtheorem{Remark}{Remark}[section]
\title{Some results on Strongly Operator Convex Functions and Operator Monotone Functions}
\date{December 18, 2017}
\author{Lawrence G. Brown,  Mitsuru Uchiyama}
\address{Department of Mathematics, Purdue Univ. West Lafayette, Indiana}
\email{lgb@math.purdue.edu}
\address{Department of Mathematics, Shimane Univ. Matsue, and Ritsumeikan Univ. Otsu, Japan} 
\email{uchiyama@riko.shimane-u.ac.jp}
\thanks{\\
The second named author was supported in part by (JSPS) KAKENHI 17K05286}
\subjclass[2010]{Primary 47A63; Secondary 47A60,  15A39, 26A51}
\begin{document} 

\keywords{Operator monotone functions;  Pick functions; Loewner theorem; Operator convex functions;  
Strongly operator convex functions; Completely monotone functions}

\begin{abstract}
This paper concerns three classes of real-valued functions on intervals, operator monotone functions, operator convex functions, and strongly operator convex functions.  Strongly operator convex functions were previously treated in [3] and [4], where 
operator algebraic semicontinuity theory or operator theory were substantially used.  In this paper we provide an alternate treatment that uses only operator inequalities (or even just matrix inequalities).  We show also that if $t_0$ is a point in the domain of a continuous function $f$, then $f$ is operator monotone if and only if $(f(t)-f(t_0)/(t-t_0)$ is strongly operator convex.  Using this and previously known results, we provide some methods for constructing new functions in one of the three classes from old ones.  We also include some discussion of completely monotone functions in this context and some results on the operator convexity or strong operator convexity of $\varphi\circ f$ when $f$ is operator convex or strongly operator convex.\\
\end{abstract}
\maketitle

\section{Introduction}
 Let $f(t)$ be a real continuous function defined on a non-degenerate interval $J$ 
in the real axis. 
For a bounded self-adjoint operator (or matrix) $A$
 on a Hilbert space ${\bf H}$ whose spectrum is in $J$, $f(A)$ is well-defined. 
Then $f$ is called an {\it operator monotone function} on 
$J$, denoted by $f\in {\bf P}(J)$, if $f(A)\leqq f(B)$, whenever $A\leqq B$. 
We call $f$ {\it operator decreasing} if $-f$ is operator
 monotone. 
The L\"{o}wner theorem \cite{L-1}
says that a $C^1$-function $f$ 
is operator monotone on an open interval $J$ if and only if 
the L\"{o}wner kernel function $K_f(t,s)$ defined by 
$$K_f(t,s)=\frac{f(t)-f(s)}{t-s}\quad(t\ne s), \quad K_f(t,t)=f'(t),$$ 
is positive semi-definite on $J$, and that such a function $f$ 
possesses a holomorphic extension $f(z)$ into the open upper 
half plane $\Pi_+$ 
which maps $\Pi_+$ into itself (unless $f$ is constant), namely $f(z)$ is a {\it Pick function}. 
Since $f(z)$ also has a holomorphic extension to the open lower 
half plane $\Pi_-$, then $f(t)$ has a holomorphic extension to 
 $J\cup \Pi_+\cup \Pi_-$. 
In this case, it follows from Herglotz's theorem that $f(t)$ has an integral representation:
\begin{equation}\label{eq:1-0}
f(t)= \alpha + \beta t + \int_{-\infty}^{\infty}
(\frac{1}{x-t}-\frac{1}{x-t_0})d\nu(x),
\end{equation}
where $t_0$ is any point in $J$, $\alpha$ is real and depends on the choice of $t_0$, $\beta\geqq 0$ and  $\nu $ is a Borel measure so that 
$$\int_{-\infty}^{\infty} \frac{1}{x^2+1} d\nu(x)<\infty, \quad \nu(J)=0.$$
 For further details see \cite{B-0, D, H-J, R}. \par
 A continuous function $f$ defined on $J$ is called an 
 {\it operator convex function} on $J$ if 
$f(sA+(1-s)B)\leqq s f(A)+(1-s)f(B)$ for every $0<s<1$ and for every pair $A, B$ with spectra in $J$. 
An {\it operator concave function} is similarly defined. 
Bendat-Sherman \cite{Bend-S} have shown that a $C^1$-function $g(t)$
on an open interval $J$ is operator convex if and only if $K_g (t, t_0)$ 
is operator monotone on $J$ for every $t_0\in J$.
In \cite{U-2010} it was proved that $g(t)$ is 
operator convex if $K_g (t, t_0)$ 
is operator monotone for one point $t_0\in J$.
We remark that if $g(t)$ is operator convex on an open interval $J$, then 
$g(t)$ has a holomorphic extension to  $J\cup \Pi_+\cup \Pi_-$. \par
Davis \cite{Davis} has shown that $g$ is an operator convex function on $J$ 
 if and only if $Pg(PAP)P \leq Pg(A)P$ for every $A$ with spectrum in $J$ and 
for every orthogonal projection $P$. 
In \cite{B-2} Brown called $g$ a {\it strongly operator convex function} 
if $P g (PAP) P \leq g(A)$ for every $A$ with spectrum in $J$ and 
for every orthogonal projection $P$; actually, he first investigated such functions, without naming them, 
in \cite[Theorem 2.36]{B-1}. We write here $g\in {\bf SOC}(J)$ if $g$ is strongly operator convex.
It is clear that a strongly operator convex function is operator convex 
and that the identity function $f(t)=t$ is not 
strongly operator convex on any interval.  \par

In both \cite{B-1} and \cite{B-2} it was shown that there are many equivalent characterizations of strongly operator convex functions.  These include operator inequalities, a global condition, an integral representation, and a differential condition, as well as more technical conditions involving operator algebras or operator theory.  We will show the most important of these equivalences in the framework of operator inequalities.  We also show
 that  if  $f(t)$ is a $C^1$-function on $J$ and $t_0\in J$, then 
$f(t)$ is operator monotone if and only if $K_f(t, t_0)$ is strongly operator convex. 
By making use of this we will give some methods to construct new operator monotone 
functions from old ones and prove that any $g$ in ${\bf SOC}(0, \infty)$ is a completely monotone function. 

\section{Preliminary Results}
Let $g(t)$ be a continuous function on $J$ and $\lambda$ a real number. Then 
it is clear that  $g- \lambda \in{\bf SOC}(J)$  if and only if 
\begin{equation*}
Pg(PAP)P + \lambda (I-P) \leq g(A)   
\end{equation*}
for every bounded self-adjoint operator $A$ with spectrum in $J$ and 
for every orthogonal projection $P$.

The following proposition is essentially treated in 
\cite{B-2}; in particular the condition (iii) is explicitly stated. However it seems to be worthwhile to give a different proof. 
\begin{Proposition}\label{Th:1}
Let $g(t)$ be a non-constant continuous function on $J$. Then the following are equivalent: 
\begin{itemize}
\item[(i)] $g - \lambda\in {\bf SOC}(J)$.
\item[(ii)]
$g(t)>\lambda$ and 
\begin{align*}&\frac{1}{2} g(A) + \frac{1}{2} g(B) - g(\frac{A + B}{2})\\
 &\geq  
\frac{1}{4} \left( g(A) - g(B)\right) \{\frac{1}{2} g(A) + \frac{1}{2} g(B) -  \lambda I\}^{-1} 
\left( g(A) - g(B)\right)\end{align*}
for every pair of bounded self-adjoint operators $A, B$ with spectra in $J$.
\item[(iii)]
$g(t)>\lambda$ and  
\begin{align*}& 
s g(A) + (1-s) g(B) - g(s A + (1-s) B) \\
&\geq 
s(1-s) \left( g(A) - g(B)\right) \{s g (B) 
+ (1-s) g(A) -  \lambda I \}^{-1} 
\left( g(A) - g(B)\right)\end{align*}
for $0<s<1$ and for every pair of bounded self-adjoint operators $A, B$ with spectra in $J$.

\item[(iv)]
$g(t)>\lambda$ and 
\begin{align*} &S^* g(A) S + \sqrt{I - S^*S} g(B) \sqrt{I-S^*S}- 
g(S^* A S + \sqrt{I - S^*S} B \sqrt{I - S^*S}) \\
 &\geq  X
 \{ \sqrt{I-SS^*} g (A) \sqrt{I-SS^*} + 
S g (B) S^* -  \lambda I\}^{-1} X^*
\end{align*}
for every contraction $S$ and for every pair of bounded self-adjoint operators 
$A, B$ with spectra in $J$, 
where $X= S^* g(A) \sqrt{I - SS^*} - \sqrt{I-S^*S} g(B) S^*$.
\end{itemize}
\end{Proposition}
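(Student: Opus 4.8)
The plan is to prove the four conditions equivalent by the cycle (i) $\Rightarrow$ (iv) $\Rightarrow$ (iii) $\Rightarrow$ (ii) $\Rightarrow$ (i). The organizing principle is that each inequality in (ii)--(iv) is, via the Schur complement, the positivity of a $2\times2$ operator matrix: if $N>0$, then $M\ge XN^{-1}X^{*}$ holds precisely when $\bigl(\begin{smallmatrix} M & X\\ X^{*} & N\end{smallmatrix}\bigr)\ge 0$. I would read (i) the same way: writing $\mathbf H=\operatorname{ran}P\oplus\ker P$, the relation $Pg(PCP)P+\lambda(I-P)\le g(C)$ is the positivity of $\bigl(\begin{smallmatrix} g(C)_{11}-g(PCP) & g(C)_{12}\\ g(C)_{21} & g(C)_{22}-\lambda I\end{smallmatrix}\bigr)$. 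Two of the four links are then mere specializations. For (iv) $\Rightarrow$ (iii) I would insert the scalar contraction $S=\sqrt{s}\,I$, so that $S^{*}S=SS^{*}=sI$ and $\sqrt{I-S^{*}S}=\sqrt{1-s}\,I$; then $S^{*}AS+\sqrt{I-S^{*}S}\,B\sqrt{I-S^{*}S}=sA+(1-s)B$, the term $X$ collapses to $\sqrt{s(1-s)}\,(g(A)-g(B))$, and the bracketed operator becomes $(1-s)g(A)+sg(B)-\lambda I$, turning (iv) verbatim into (iii); taking $s=\tfrac12$ gives (iii) $\Rightarrow$ (ii).

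For (i) $\Rightarrow$ (iv) the key construction is the unitary dilation
\[
U=\begin{pmatrix} S & \sqrt{I-SS^{*}}\\ \sqrt{I-S^{*}S} & -S^{*}\end{pmatrix},
\]
which is unitary by the intertwining identity $S\sqrt{I-S^{*}S}=\sqrt{I-SS^{*}}\,S$. Put $\tilde A=\operatorname{diag}(A,B)$, whose spectrum lies in $J$, and $C=U^{*}\tilde A U$. A block multiplication shows that the upper-left corner of $C$ is $S^{*}AS+\sqrt{I-S^{*}S}\,B\sqrt{I-S^{*}S}$, while $g(C)=U^{*}\operatorname{diag}(g(A),g(B))U$ has upper-left corner $S^{*}g(A)S+\sqrt{I-S^{*}S}\,g(B)\sqrt{I-S^{*}S}$, off-diagonal corner exactly the $X$ of (iv), and lower-right corner $\sqrt{I-SS^{*}}\,g(A)\sqrt{I-SS^{*}}+Sg(B)S^{*}$. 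Applying $g-\lambda\in\mathbf{SOC}(J)$ to $C$ with $P=\operatorname{diag}(I,0)$ gives $Pg(PCP)P+\lambda(I-P)\le g(C)$, which is precisely the positivity of the $2\times2$ matrix whose Schur complement is (iv). To invert the lower-right corner legitimately I must first know $g>\lambda$; I would get this from (i) and non-constancy by testing the definition on a diagonal $A=\operatorname{diag}(a,b)$ against the rank-one projection onto $(\cos\theta,\sin\theta)$: if $g$ ever took the value $\lambda$, the resulting $2\times2$ inequality would force $g\equiv\lambda$ on a subinterval, contradicting continuity and non-constancy.

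The substantive closing step is (ii) $\Rightarrow$ (i). Given $C$ with spectrum in $J$ and a projection $P$ (the cases $P=0,I$ being trivial since $g>\lambda$), I would set $V=2P-I$, a self-adjoint unitary, and apply (ii) to the pair $A=C$, $B=VCV$. Then $\tfrac{A+B}{2}$ is the pinching of $C$ along $P$ and $g(B)=Vg(C)V$, so $\tfrac12 g(A)+\tfrac12 g(B)$ and $g(\tfrac{A+B}{2})$ are block diagonal while $g(A)-g(B)$ is purely off-diagonal. Substituting these into (ii) and reading off the upper-left block yields
\[
g(C)_{11}-g(PCP)\ \ge\ g(C)_{12}\,\bigl(g(C)_{22}-\lambda I\bigr)^{-1}g(C)_{21},
\]
which is exactly the Schur-complement form of $Pg(PCP)P+\lambda(I-P)\le g(C)$, i.e. (i).

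I expect the main obstacle to be computational bookkeeping: verifying that $U$ is unitary and that conjugation by $U$ produces precisely the three corner expressions of (iv), and, in the last step, confirming that the off-diagonal cancellations leave the clean block-diagonal identities above. The one genuinely non-formal point is the strict positivity $g>\lambda$, which is what makes every Schur complement well defined and is the place where non-constancy is essential.
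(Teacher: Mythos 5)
Your cycle (i)$\Rightarrow$(iv)$\Rightarrow$(iii)$\Rightarrow$(ii)$\Rightarrow$(i) is a genuinely different and more economical organization than the paper's, which proves (i)$\Rightarrow$(ii) via the unitary $W=\frac{1}{\sqrt2}\bigl(\begin{smallmatrix} I & -I\\ I & I\end{smallmatrix}\bigr)$, then (ii)$\Rightarrow$(iv) by applying (ii) to the pair $U^*(A\oplus B)U$, $V^*(A\oplus B)V$ for \emph{two} unitary dilations $U,V$ of $S$, then (iv)$\Rightarrow$(i) by putting $S=P$ and $B=tI$, with (iv)$\Rightarrow$(iii)$\Rightarrow$(ii) noted as clear. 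Your two substantive steps check out: applying the defining inequality of (i) directly to $C=U^*(A\oplus B)U$ with $P=\operatorname{diag}(I,0)$ produces exactly the three corner expressions of (iv) in one stroke (the paper needs two unitaries only because it starts from (ii) rather than (i)), and the pinching trick $B=(2P-I)C(2P-I)$ correctly converts (ii) into the Schur-complement form of (i); the specialization $S=\sqrt{s}\,I$ for (iv)$\Rightarrow$(iii) and $s=\tfrac12$ for (iii)$\Rightarrow$(ii) is also right.

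The genuine gap is your derivation of $g>\lambda$ from (i), which is the step that legitimizes every inverse you take. Your rank-one test is the right tool, but the conclusion you draw from it is wrong as stated: ``$g\equiv\lambda$ on a subinterval'' does \emph{not} contradict continuity and non-constancy on $J$ (consider $\lambda+\max(t,0)^2$ on $(-1,1)$), and you do not explain how the $2\times2$ inequality yields even that subinterval statement. This is exactly where the paper works hardest: it first extracts $g\ge\lambda$ and operator convexity from its inequality (2.1), then shows that $g(a)=\lambda$ forces $g(\frac{a+t}{2})=\lambda$ for all $t\in J$, and then \emph{iterates} this procedure so that the level set exhausts $J$, giving $g\equiv\lambda$ on all of $J$ --- that is the actual contradiction with non-constancy. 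Fortunately your own construction can be repaired more directly than the paper's: with $A=\operatorname{diag}(a,b)$, $P$ the projection onto $(\cos\theta,\sin\theta)$, and $m=a\cos^2\theta+b\sin^2\theta$, the $(1,1)$ entry of your positive $2\times2$ matrix reads $g(a)-g(m)\cos^2\theta-\lambda\sin^2\theta\ge0$; the case $a=b$ gives $g\ge\lambda$ on $J$, and if $g(a)=\lambda$ the entry condition gives $g(m)\le\lambda$, hence $g(m)=\lambda$, for every $m=(1-u)a+ub$ with $b\in J$ and $0<u<1$. Since such points cover the interior of $J$, continuity gives $g\equiv\lambda$ on $J$, a genuine contradiction. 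With that substitution your proof is complete.
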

\begin{proof}
(i) $\Rightarrow$ (ii). Define a unitary operator $W$ and a projection $P$ on 
${\bf H} \oplus {\bf H}$ by 
$$W=\frac{1}{\sqrt{2}} \begin{pmatrix} I & -I \\ I & I \end{pmatrix}, \; 
P= \begin{pmatrix} I & 0 \\ 0 & 0 \end{pmatrix}.
$$ Then we have 
$ Pg \left(P W^*(A\oplus B) W P\right)P  + \lambda I \leq 
g \left(W^* (A\oplus B) W\right)$, from which it follows that 
\begin{equation}\label{eq:1-2}
 \begin{pmatrix}g(\frac{A+B}{2}) & 0\\ 0 & \lambda I \end{pmatrix} \leq  
\frac{1}{2} \begin{pmatrix} g(A) + g(B) & g(B)- g(A) \\ g(B) - g(A) & g(A) + g(B)
\end{pmatrix}.
\end{equation}
 This shows that $g(t)\geq \lambda $ and $g(t)$ is 
operator convex. Assume $g (a) =\lambda$ for $a\in J$. Substituting $a I$ and $t I$ for 
$A$ and $B$, respectively, in the above inequality yields 
$$(g(t) - \lambda)(g(\frac{a+t}{2}) - \lambda) \leq 0.$$
Since $g(t)\geq \lambda$ and $g(t)$ is convex, $g(\frac{a+t}{2})=\lambda$ 
for every $t$, no matter whether $g(t)=\lambda$ or $g(t)>\lambda$. Repeat 
this procedure to get $g =\lambda$. This is inconsistent with the assumption. 
We therefore get $g(t)>\lambda$ for $t\in J$. The operator 
inequality in (ii) follows from $\eqref{eq:1-2}$. \\
(ii) $\Rightarrow$ (iv). For a contraction $S$ on ${\bf H}$
 define unitary operators $U, \;V$ on ${\bf H} \oplus {\bf H}$ by 
$$U=\begin{pmatrix} S & \sqrt{I-SS^*}\\ \sqrt{I-S^*S} & - S^*\end{pmatrix}, \;
V= \begin{pmatrix}S & -\sqrt{I-SS^*}\\ \sqrt{I-S^*S} &  S^*\end{pmatrix}.$$
By (ii) we have 
\begin{align*} &\frac{1}{2} \{U^* (g(A)\oplus g(B))U + (V^*(g(A)\oplus g(B))V\} -
g(\frac{1}{2} U^*(A\oplus B)U +\frac{1}{2}V^*(A\oplus B)V)\\
& \geq \frac{1}{4} Y \{\frac{1}{2}U^* (g(A)\oplus g(B))U + \frac{1}{2}(V^*(g(A)\oplus g(B))V - 
\lambda( I\oplus I)\}^{-1}Y,
\end{align*}
where $Y=U^* (g(A)\oplus g(B))U - (V^*(g(A)\oplus g(B))V$.
Since 
\begin{align*}&\frac{1}{2} U^*(A\oplus B)U +\frac{1}{2}V^*(A\oplus B)V 
=\bigl(\begin{smallmatrix}S^* A S + \sqrt{I - S^*S} B \sqrt{I - S^*S}& 0\\
0&  \sqrt{I-SS^*}A \sqrt{I-SS^*} + S B S^*\end{smallmatrix}\bigr), \\
&\frac{1}{2} U^*(A\oplus B)U -\frac{1}{2}V^*(A\oplus B)V
= \bigl(\begin{smallmatrix} 0 & S^* A \sqrt{I - SS^*} - \sqrt{I-S^*S} B S^*\\
\sqrt{I - SS^*}AS - SB\sqrt{I-S^*S} & 0 \end{smallmatrix}\bigr), 
\end{align*}
we obtain 
\begin{align*} &\bigl(\begin{smallmatrix}S^* g(A) S + \sqrt{I - S^*S} g(B) \sqrt{I-S^*S}
& 0\\
0 & \sqrt{I-SS^*} g (A) \sqrt{I-SS^*} + S g (B) S^*\end{smallmatrix}\bigr) \\
-& \bigl(\begin{smallmatrix} g(S^* A S + \sqrt{I - S^*S} B \sqrt{I - S^*S}) & 0  \\
0 & g(\sqrt{I-SS^*}A \sqrt{I-SS^*} + S B S^*) \end{smallmatrix}\bigr)\\
\geq & \bigl(\begin{smallmatrix}0&X\\ X^*& 0\end{smallmatrix}\bigr)
\bigl(\begin{smallmatrix}  S^* g(A) S + \sqrt{I - S^*S} g(B) \sqrt{I-S^*S} -\lambda I & 0\\
0& \sqrt{I-SS^*} g (A) \sqrt{I-SS^*} + S g (B) S^* -\lambda I\end{smallmatrix}\bigr)^{-1} 
\bigl(\begin{smallmatrix}0&X\\ X^*& 0\end{smallmatrix}\bigr).
\end{align*}
By comparing the $(1, 1)$ elements on both sides we derive the required inequality. \\
(iv)$\Rightarrow$(i). Put $S=P$ and $B=t I\; (\forall t \in J)$ in (iv). Then we get 
\begin{align*}
&Pg(A)P + g(t)(I-P) - g(PAP + t(I-P)) \\
\geq &Pg(A)(I-P)\{(I-P)g(A)(I-P)+g(t)P -\lambda I\}^{-1}(I-P)g(A)P,
\end{align*}
and hence 
\begin{align*}
&Pg(A)P -  Pg(PA P)P \\
\geq &Pg(A)(I-P)\{(I-P)g(A)(I-P) -\lambda (I-P)\}^{-1}(I-P)g(A)P.
\end{align*}
This implies
\begin{align*}
&g(A) - Pg(PAP)P+ \lambda (I-P)\\
=&\begin{pmatrix}Pg(A)P -  Pg(PAP)P&Pg(A)(I-P)\\
(I-P)g(A)P&(I-P)g(A)(I-P) -\lambda (I-P)\end{pmatrix}\geqq 0.
\end{align*} 
We therefore obtain (i). 
Clearly (iv)$\Rightarrow$(iii)$\Rightarrow$(ii).
\end{proof}
\begin{Exm}\rm
We write $A>0$ if $A\geq 0$ and $A$ is invertible. Recall the following equality from \cite [page 555]{H-J}:
for $A>0, \;B>0$ and for $0<s<1$
\begin{align}\label{eq:2} 
&(s A^{-1} + (1-s) B^{-1}) - (s A + (1-s) B)^{-1}\notag \\
 =& s(1-s)(A^{-1} - B^{-1})((1-s)A^{-1}
+ s B^{-1})^{-1}  (A^{-1} - B^{-1}).
\end{align}
From this equality and Proposition \ref{Th:1},  $g(t)=1/t$ turns out to be 
strongly operator convex on $(0, \infty)$.
\end{Exm}
\begin{Remark}\rm
Even if $g\in {\bf SOC}(J)$ and $g(t)>\lambda$, $g - \lambda $ is 
not necessarily in ${\bf SOC}(J)$.
We give a counterexample. Note that $1/t\in {\bf SOC}(0, 1)$ and $1/t > 1$ on $(0,1)$. 
But $1/t -1 \not\in {\bf SOC}(0, 1)$. 
Because, assume $ 1/t -1 \in {\bf SOC}(0, 1)$. Then by Proposition\ref{Th:1}(ii),
  for $0<A, B <1$
\begin{align*}
&\frac{1}{2} (A^{-1} +  B^{-1}) - 2(A + B)^{-1} \\
\geqq &\frac{1}{4}(A^{-1} - B^{-1})\bigl(\frac{1}{2}(A^{-1}
+  B^{-1}) - I\bigr)^{-1}  (A^{-1} - B^{-1}).
\end{align*}
But, by $\eqref{eq:2}$ the left side equals 
$\frac{1}{4}(A^{-1} - B^{-1})\bigl(\frac{1}{2}(A^{-1}
+  B^{-1})\bigr)^{-1}  (A^{-1} - B^{-1})$. Therefore the above inequality does not hold 
if $A^{-1} - B^{-1}$ is invertible.
\end{Remark}
\begin{Lemma}\label{Le:1}\rm(\cite[Theorem 2.36]{B-1} or \cite[Theorem 1.2]{B-2}). \it $g\in {\bf SOC}(J)$ if and only if 
$g=0$, or $g(t)>0, \forall t\in J$ and $1/g(t)$ is operator concave. 
\end{Lemma}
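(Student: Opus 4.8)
The plan is to read off the equivalence from Proposition \ref{Th:1} taken with $\lambda=0$, combined with the algebraic identity \eqref{eq:2}. The first step is to dispose of the degenerate cases by hand, since Proposition \ref{Th:1} is stated only for non-constant $g$. If $g\equiv 0$ then $Pg(PAP)P=0=g(A)$, so $g\in\mathbf{SOC}(J)$; this is the first alternative of the statement. If $g$ is a nonzero constant $c$, the defining inequality reads $cP\le cI$, which holds for every projection $P$ precisely when $c>0$, and in that case $1/g\equiv 1/c$ is trivially operator concave; so the assertion is correct for constants, and we may assume henceforth that $g$ is non-constant.

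For non-constant $g$ I would use the equivalence (i)$\iff$(iii) of Proposition \ref{Th:1} with $\lambda=0$. Thus $g\in\mathbf{SOC}(J)$ if and only if $g(t)>0$ on $J$ and, writing $C=g(A)$, $D=g(B)$ and $M=g(sA+(1-s)B)$,
\begin{equation*}
sC+(1-s)D-M \ge s(1-s)(C-D)\{(1-s)C+sD\}^{-1}(C-D)
\end{equation*}
for all $0<s<1$ and all self-adjoint $A,B$ with spectra in $J$. Since $g>0$, the operators $C,D,M$ are positive and invertible, so $C^{-1},D^{-1}>0$ and I may insert them into \eqref{eq:2} in place of $A,B$. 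This produces the identity
\begin{equation*}
(sC+(1-s)D)-(sC^{-1}+(1-s)D^{-1})^{-1} = s(1-s)(C-D)\{(1-s)C+sD\}^{-1}(C-D),
\end{equation*}
whose right-hand side is exactly the right-hand side of the displayed inequality.

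The key step is then immediate: substituting the identity, the inequality collapses to $M\le (sC^{-1}+(1-s)D^{-1})^{-1}$, and, because $X\mapsto X^{-1}$ is operator decreasing on positive invertible operators, this is equivalent to
\begin{equation*}
M^{-1}\ge sC^{-1}+(1-s)D^{-1},
\end{equation*}
that is, $(1/g)(sA+(1-s)B)\ge s(1/g)(A)+(1-s)(1/g)(B)$ for all $s\in(0,1)$, which says precisely that $1/g$ is operator concave. As the identity is an equality and the passage to inverses is reversible, running the equivalences backward yields the converse, so for non-constant $g$ we get $g\in\mathbf{SOC}(J)\iff g>0$ and $1/g$ operator concave. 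Together with the constant cases this is the claim.

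I expect the only real obstacle to be bookkeeping rather than substance: one must verify that the positivity hypothesis of \eqref{eq:2} is met (guaranteed by $g>0$) and that the invertibility needed for the final inversion holds (again from $g>0$), and one must remember to treat the constant and identically-zero functions separately. A secondary point worth noting is that using part (iii) of Proposition \ref{Th:1}, rather than the midpoint condition (ii), delivers operator concavity for every $s\in(0,1)$ at once and so avoids any appeal to a midpoint-concavity-implies-concavity argument.
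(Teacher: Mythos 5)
Your proof is correct and takes essentially the same approach as the paper's: both reduce the lemma to Proposition \ref{Th:1} with $\lambda=0$ combined with the identity \eqref{eq:2} evaluated at $g(A)^{-1}$ and $g(B)^{-1}$, then take inverses using the anti-monotonicity of $X\mapsto X^{-1}$. The only differences are minor refinements on your side: you invoke part (iii) (general $s$), which yields full operator concavity directly and avoids the implicit midpoint-concavity-plus-continuity upgrade in the paper's use of part (ii) with $s=\tfrac{1}{2}$, and you treat the constant cases explicitly, whereas the paper's ``we may assume $g(t)>0$'' glosses over the fact that Proposition \ref{Th:1} is stated only for non-constant $g$.
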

\begin{proof}
We may assume $g(t)>0$. Put $s=1/2$ and substitute $(1/g)(A)$ and $(1/g) (B)$ for 
$A$ and $B$ in $\eqref{eq:2}$, respectively. Then we get 
\begin{align*}
&\frac{1}{2}(g(A)+g(B))-\bigl(\frac{g(A)^{-1} + g(B)^{-1}}{2}\bigr)^{-1}\\
=&\frac{1}{4}
\bigl(g(A)-g(B)\bigr)\bigl(\frac{g(A) + g(B)}{2} \bigr)^{-1}\bigl(g(A)-g(B)\bigr).
\end{align*}
On the other hand, by Proposition\;\ref{Th:1}(ii), 
$g\in {\bf SOC}(J)$ if and only if 
\begin{align*}&\frac{1}{2} g(A) + \frac{1}{2} g(B) - g(\frac{A + B}{2})\\
 \geq&  
\frac{1}{4} \left( g(A) - g(B)\right)\bigl(\frac{g(A) + g(B)}{2} \bigr)^{-1}
\left( g(A) - g(B)\right).\end{align*}
In view of the above equality, this is equivalent to 
$$ g(\frac{A+B}{2})^{-1}\geqq \frac{1}{2}\bigl( g(A)^{-1} + g(B)^{-1}\bigr);$$
i.e., $1/g(t)$ is operator concave on $J$. 
\end{proof}

It is well-known that $g(t)$ is operator decreasing (or operator monotone) 
on an infinite interval 
$(a, \infty)$ if and only if $g(t)$ is operator convex (or operator concave) and
 $g(\infty)<\infty$ (or $g(\infty) >-\infty$). For a function on $(-\infty, b)$ 
we can get the symmetric result. It is also known that for 
$f\in {\bf P}(a, b)$ there is a decomposition of $f(t)$ such that  
$f(t)=f_+(t) + f_-(t) \quad (a<t<b)$, 
where $f_+\in {\bf P}(a, \infty)$ and $f_-\in {\bf P}(-\infty, b)$ (cf. \cite{U-2010}).

Before proceeding to the next proposition, recall that 
$t^2$ is operator convex on $(-\infty, \infty)$. 
Proposition\;\ref{Th:2}(iii) has been essentially proved in (the proof of) Proposition 2.39 
of \cite{B-1}.
\begin{Proposition}\label{Th:2}
\begin{itemize}
\item[(i)] $g\in {\bf SOC}
(-\infty, \infty)$ if and only if $g(t)$ is a non-negative 
constant function. 
\item[(ii)] Let $g(t)$ be a non-constant continuous function on $(a, \infty)$ with $-\infty<a$. 
Then  $g \in {\bf SOC}(a, \infty)$
if and only if  $g(t)>0$ and $g(t)$ is operator decreasing. In this case, $g(t)$ is represented as 
$$g(t)= g(\infty) + \int_{(-\infty, a]}\frac{1}{t-x}d\nu_- (x),\quad \int_{(-\infty, a]}\frac{1}{|x| + 1}d\nu_- (x)<\infty.$$ 
\item[(iii)] Let $g(t)$ be a non-constant continuous function on $(-\infty, b)$ with $b<\infty$. Then 
$g\in {\bf SOC}(-\infty, b)$ if and only if $g(t)>0$ and $g(t)$ is operator monotone.
In this case, $g(t)$ is represented as 
$$g(t)= g(-\infty) + \int_{[b, \infty)}\frac{1}{x-t}d\nu_+ (x),\quad 
\int_{[b, \infty)}\frac{1}{|x| + 1}d\nu_+ (x)<\infty.$$ 
\end{itemize}
 \end{Proposition}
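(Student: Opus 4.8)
The plan is to reduce all three parts to Lemma \ref{Le:1} (which says $g\in{\bf SOC}(J)$ iff $g=0$ or $g>0$ and $1/g$ is operator concave), together with the stated fact that a function on a half-line $(a,\infty)$ is operator decreasing (resp.\ operator monotone) exactly when it is operator convex (resp.\ operator concave) and bounded in the appropriate direction at $+\infty$, plus the elementary observation that $x\mapsto 1/x$ is operator decreasing on $(0,\infty)$. I would establish (ii) first, deduce (iii) from it by the reflection $t\mapsto -t$, and settle (i) by a two-sided monotonicity argument.

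For the equivalence in (ii), suppose first $g\in{\bf SOC}(a,\infty)$ is non-constant. By Lemma \ref{Le:1}, $g(t)>0$ and $h:=1/g$ is operator concave on $(a,\infty)$. Since $h>0$ we have $h(\infty)>-\infty$, so by the quoted half-line fact $h$ is operator monotone. Composing with the operator decreasing function $1/x$ (legitimate because $h>0$), for $A\leqq B$ with spectra in $(a,\infty)$ we get $0<h(A)\leqq h(B)$, hence $g(A)=h(A)^{-1}\geqq h(B)^{-1}=g(B)$, so $g$ is operator decreasing. Conversely, if $g>0$ is operator decreasing, the same composition shows $1/g$ is operator monotone, hence operator concave, and Lemma \ref{Le:1} returns $g\in{\bf SOC}(a,\infty)$.

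For the representation in (ii), note $-g\in{\bf P}(a,\infty)$, so by \eqref{eq:1-0} (with $t_0\in(a,\infty)$ and $\nu$ supported on $(-\infty,a]$) we have $-g(t)=\alpha+\beta t+\int(\frac{1}{x-t}-\frac{1}{x-t_0})\,d\nu(x)$. Because $g$ is decreasing and positive, $g(\infty)\in[0,\infty)$ is finite, which forces $\beta=0$. Letting $t\to\infty$, monotone convergence applied to the integrand (which increases to the positive limit $\frac{1}{t_0-x}$) shows simultaneously that $\int_{(-\infty,a]}\frac{1}{t_0-x}\,d\nu<\infty$ and identifies $\alpha$; the integral bound is equivalent to $\int_{(-\infty,a]}\frac{1}{|x|+1}\,d\nu<\infty$, and rearranging yields $g(t)=g(\infty)+\int_{(-\infty,a]}\frac{1}{t-x}\,d\nu_-(x)$ with $\nu_-=\nu$. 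Part (iii) then follows by applying (ii) to $\tilde g(t)=g(-t)$ on $(-b,\infty)$: the defining inequality for ${\bf SOC}$ is invariant under $A\mapsto -A$ (since $-PAP=P(-A)P$), so $g\in{\bf SOC}(-\infty,b)\iff\tilde g\in{\bf SOC}(-b,\infty)$, while $g$ is operator monotone iff $\tilde g$ is operator decreasing; transporting the representation through $x\mapsto -x$ gives the stated formula with $\nu_+$ supported on $[b,\infty)$ and $g(-\infty)=\tilde g(\infty)$.

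Finally, for (i), if $g\in{\bf SOC}(-\infty,\infty)$ is not identically $0$ then by Lemma \ref{Le:1} $g>0$ and $h=1/g$ is operator concave on all of $\mathbb{R}$, with $h>0$. Restricting to $(a,\infty)$ and using $h(\infty)>-\infty$ shows $h$ is operator monotone, hence non-decreasing, on every $(a,\infty)$; restricting to $(-\infty,b)$ and using the symmetric half-line fact with $h(-\infty)>-\infty$ shows $h$ is operator decreasing, hence non-increasing, on every $(-\infty,b)$. Being simultaneously non-decreasing and non-increasing on each overlap $(a,b)$, $h$ is constant, so $g$ is a positive constant; together with the trivial check that a non-negative constant $c$ satisfies $Pg(PAP)P=cP\leqq cI=g(A)$, this proves (i). I expect the one genuinely delicate point to be the integrability upgrade in (ii)—passing from the generic $\int\frac{1}{x^2+1}\,d\nu<\infty$ of \eqref{eq:1-0} to $\int\frac{1}{|x|+1}\,d\nu<\infty$—which must be extracted from the finiteness of $g(\infty)$ by the monotone-convergence argument above, rather than from operator monotonicity alone.
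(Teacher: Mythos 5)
Your proposal is correct and takes essentially the same route as the paper: reduce everything to Lemma \ref{Le:1}, invoke the quoted half-line equivalence between operator monotonicity (or decrease) and operator concavity (or convexity) with a finite limit at infinity, and obtain the representation by applying \eqref{eq:1-0} to $-g$, with $\beta=0$ forced by sign boundedness and the integrability upgrade extracted from the finiteness of $g(\infty)$. Your explicit reflection argument for (iii), the two-sided monotonicity argument for (i), and the monotone-convergence details merely spell out steps the paper dispatches as ``analogous'' or leaves terse.
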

\begin{proof} (i). By Lemma\;\ref{Le:1},  $g\in {\bf SOC}(-\infty, \infty)$ and $g\ne 0$ if and only if  
$g(t)>0$ and $1/g(t)$ is a positive operator concave function on $(-\infty, \infty)$; this implies that 
$g(t)$ is constant.\\
(ii). Note that $g\in {\bf SOC}(a, \infty)$ and $g\ne 0$ if and only if  
$g(t)>0$ and $1/g(t)$ is an operator concave function on$(a, \infty)$, which implies 
the operator monotonicity of $1/g(t)$. This implies $g(t)$ is operator decreasing. 
Represent $-g(t)$ by $\eqref{eq:1-0}$ and write $\nu_-$ instead of $\nu$. 
Since $-g(t)<0$,  we get  $\beta=0$ and 
$-g(\infty)= \alpha + \int_{(-\infty, a]}- \frac{1}{x - t_0} d\nu_-(x)$. This yields 
$$-g(t)= -g(\infty) + \int_{(-\infty, a]} \frac{1}{x-t}d\nu_-(x), \quad 
\int_{(-\infty, a]} \frac{1}{|x|+1} d\nu_-(x)<\infty.$$
We therefore obtain the required formula. 
We can see (iii) analogously.
\end{proof}  
 
The following corollary is easy to see and useful to construct strongly operator 
convex functions.  
\begin{Corollary}\label{Cor:1} Let $f(t)$ be an increasing continuous 
function on $(0, \infty)$ 
with $\lambda\!:=f(0+)>-\infty$. Then the following are equivalent: \\
(i)  $f\in {\bf P}(0, \infty)$,\; (ii) $f(\frac{1}{t})- \lambda\in {\bf SOC}(0, \infty)$, 
(iii) $f(-\frac{1}{t})- \lambda \in {\bf SOC}(-\infty, 0)$.
\end{Corollary}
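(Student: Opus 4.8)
The plan is to derive the whole equivalence from Proposition \ref{Th:2}, using nothing beyond the elementary fact that inversion $t\mapsto 1/t$ is operator decreasing on $(0,\infty)$ together with the operator monotonicity of $f$. First I would dispose of the trivial case: if $f$ is constant then $f\equiv\lambda$, the functions in (ii) and (iii) are identically $0$, and all three statements hold, the zero function being strongly operator convex by Lemma \ref{Le:1}. So I may assume $f$ is non-constant, hence strictly increasing, and that $g(t):=f(1/t)$ is a non-constant function on $(0,\infty)$.

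For (i)$\Rightarrow$(ii) I would argue directly. Given $0<A\leq B$ with spectra in $(0,\infty)$, the operator decreasing property of inversion gives $A^{-1}\geq B^{-1}$, and then operator monotonicity of $f$ yields $f(A^{-1})\geq f(B^{-1})$, i.e.\ $g(A)\geq g(B)$; thus $g$ is operator decreasing on $(0,\infty)$. Since $1/t>0$ and $f$ is strictly increasing with $f(0+)=\lambda$, we get $f(1/t)>\lambda$, so $g-\lambda>0$. A non-constant positive operator decreasing function on $(0,\infty)$ lies in $\mathbf{SOC}(0,\infty)$ by Proposition \ref{Th:2}(ii), which is (ii). For the converse (ii)$\Rightarrow$(i), if $f(1/t)-\lambda\in\mathbf{SOC}(0,\infty)$ then Proposition \ref{Th:2}(ii) forces $g(t)=f(1/t)$ to be operator decreasing; reversing the computation, for $0<A\leq B$ one has $B^{-1}\leq A^{-1}$, hence $g(B^{-1})\geq g(A^{-1})$, and since $g(X^{-1})=f(X)$ this reads $f(B)\geq f(A)$, so $f\in\mathbf{P}(0,\infty)$.

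For the equivalence of (ii) and (iii) I would invoke the reflection $t\mapsto -t$. Directly from the definition of strong operator convexity, replacing $A$ by $-A$ shows that $\phi(t)\in\mathbf{SOC}(0,\infty)$ if and only if $\phi(-t)\in\mathbf{SOC}(-\infty,0)$. Applying this to $\phi(t)=f(1/t)-\lambda$ and noting $\phi(-t)=f(-1/t)-\lambda$ gives (ii)$\Leftrightarrow$(iii) immediately. Alternatively, I could repeat the argument of the previous paragraph on $(-\infty,0)$: there $t\mapsto -1/t$ is \emph{operator monotone} (from $A\leq B<0$ one gets $-A^{-1}\leq -B^{-1}$), so $f(-1/t)$ is operator monotone on $(-\infty,0)$ and is $>\lambda$, whence Proposition \ref{Th:2}(iii) applies in both directions.

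The computations here are entirely routine; the only points that demand care are the bookkeeping of monotonicity directions under composition with inversion (operator monotone composed with the operator decreasing inversion becomes operator \emph{decreasing} on $(0,\infty)$, but operator \emph{monotone} on $(-\infty,0)$ because $-1/t$ is operator monotone there) and the observation that subtracting $\lambda=f(0+)$ is precisely what guarantees the strict positivity $f(\pm 1/t)>\lambda$ required to invoke Proposition \ref{Th:2}. I expect the reflection step to be the only place where the definition of $\mathbf{SOC}$ must be touched directly rather than through Proposition \ref{Th:2}, and even there one should keep in mind the usual convention for interpreting $g(PAP)$ when $PAP$ acquires a boundary eigenvalue, since the same convention governs both sides of the reflection.
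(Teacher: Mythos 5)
Your proof is correct and follows what is evidently the paper's intended route: the corollary is stated without proof (``easy to see'') immediately after Proposition~\ref{Th:2}, and deducing it from parts (ii) and (iii) of that proposition---using that $t\mapsto 1/t$ is operator decreasing on $(0,\infty)$, that $t\mapsto -1/t$ is operator monotone on $(-\infty,0)$, and the reflection $t\mapsto -t$ for (ii)$\Leftrightarrow$(iii)---is exactly that route. The only blemish is your blanket claim that a non-constant increasing function is strictly increasing, which is false in general (e.g.\ $\max(t-1,0)$); it is harmless here because you invoke strict increase only in the direction (i)$\Rightarrow$(ii), where $f$ is a non-constant operator monotone function, hence real-analytic, so it cannot take the value $\lambda=f(0+)$ anywhere in $(0,\infty)$ without being constant, and therefore $f(1/t)-\lambda>0$ as required for Proposition~\ref{Th:2}(ii).
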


Recall that a function $h(t)\in C^{\infty}(0, \infty)$ is called a \it completely monotone 
function \rm if $(-1)^n h^{(n)}(t) \geqq 0\; (0<t<\infty)$ for $n=0, 1, 2, \cdots$.   

\begin{Proposition}\label{Th:8}
\begin{itemize}
\item[(i)] Any $g$ in ${\bf SOC}(0, \infty) $ is a completely monotone function.
\item[(ii)] If $f(t)>0$ is an operator monotone function on $(0, \infty)$, then 
$1/f(t)$ is a completely monotone function.  
\item[(iii)] If $f(t)$ is an operator monotone function on $(0, \infty)$, then 
$f(t)$ is a Bernstein function; i.e., $(-1)^{n-1} f^{(n)}(t)\geq 0\; (0<t<\infty)$ 
for $n=1,2, \cdots$. 
\item[(iv)]
If $g$ is a non-zero strongly operator convex function on $(-\infty, 0)$, then $g^{(n)}(t)>0$ 
for $n=0,1,2,\cdots$. 
\end{itemize}
\end{Proposition}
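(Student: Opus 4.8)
The plan is to reduce all four parts to the integral representations recorded in Proposition~\ref{Th:2} and in \eqref{eq:1-0}, together with Lemma~\ref{Le:1}; each assertion then comes down to differentiating kernels of the form $1/(t-x)$ under the integral sign and reading off signs.

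I would begin with (i). The zero function and the positive constants are completely monotone trivially, so assume $g$ non-constant. By Proposition~\ref{Th:2}(ii) with $a=0$, $g(t)=g(\infty)+\int_{(-\infty,0]}\frac{1}{t-x}\,d\nu_-(x)$ with $\nu_-\geq0$ carried by $(-\infty,0]$ and $\int(|x|+1)^{-1}d\nu_-(x)<\infty$. On the domain $t>0\geq x$, so $t-x>0$ and $\frac{d^n}{dt^n}\frac{1}{t-x}=\frac{(-1)^n n!}{(t-x)^{n+1}}$. The technical crux is to differentiate termwise: for $t$ in a compact subinterval $[t_1,t_2]\subset(0,\infty)$ one has $\frac{n!}{(t-x)^{n+1}}\leq\frac{C_n}{|x|+1}$, which is $\nu_-$-integrable, so dominated convergence yields $(-1)^n g^{(n)}(t)=n!\int_{(-\infty,0]}\frac{d\nu_-(x)}{(t-x)^{n+1}}\geq0$ for $n\geq1$, while for $n=0$ the assertion is just $g(t)>0$. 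Hence $g$ is completely monotone.

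For (ii) I would route through (i) rather than argue directly. Since $f>0$ is operator monotone on $(0,\infty)$, the classical equivalence recalled just before Proposition~\ref{Th:2} shows $f$ is operator concave. Putting $g:=1/f>0$, we have $1/g=f$ operator concave, so Lemma~\ref{Le:1} gives $g=1/f\in{\bf SOC}(0,\infty)$; part (i) then makes $1/f$ completely monotone. For (iii) I would apply \eqref{eq:1-0} to $f$ on $J=(0,\infty)$: the condition $\nu(J)=0$ forces the measure onto $(-\infty,0]$, so $f(t)=\alpha+\beta t+\int_{(-\infty,0]}\bigl(\frac{1}{x-t}-\frac{1}{x-t_0}\bigr)d\nu(x)$ with $\beta\geq0$, $\nu\geq0$. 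Differentiating (justified by the same domination) gives $f'(t)=\beta+\int_{(-\infty,0]}\frac{d\nu(x)}{(t-x)^2}$ and $(-1)^{n-1}f^{(n)}(t)=n!\int_{(-\infty,0]}\frac{d\nu(x)}{(t-x)^{n+1}}\geq0$ for $n\geq1$ (the term $\beta\geq0$ entering only at $n=1$), since $t-x>0$; thus $f$ is a Bernstein function. Equivalently, each $\frac{1}{t-x}=\frac{1}{t+|x|}$ is completely monotone by (ii), hence so is its square, and $f'$ inherits complete monotonicity.

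Part (iv) is the mirror image of (i) on $(-\infty,0)$, the only new feature being strictness. Taking $g$ non-constant (a non-zero constant being the degenerate case where $g^{(n)}\equiv0$ for $n\geq1$), Proposition~\ref{Th:2}(iii) with $b=0$ gives $g(t)=g(-\infty)+\int_{[0,\infty)}\frac{1}{x-t}\,d\nu_+(x)$ with $\nu_+\geq0$ and, crucially, $\nu_+\not\equiv0$. Now $x\geq0>t$ forces $x-t>0$, and since $\frac{d^n}{dt^n}\frac{1}{x-t}=\frac{n!}{(x-t)^{n+1}}$ is a single positive kernel one gets $g^{(n)}(t)=n!\int_{[0,\infty)}\frac{d\nu_+(x)}{(x-t)^{n+1}}$ for $n\geq1$ and $g^{(0)}=g>0$; because $\nu_+\not\equiv0$ and the integrand is strictly positive, all these integrals are strictly positive, giving $g^{(n)}(t)>0$ for every $n\geq0$. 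The one genuine obstacle common to all parts is the interchange of differentiation and integration; once it is secured via the stated domination, the remainder is sign bookkeeping built on Proposition~\ref{Th:2} and Lemma~\ref{Le:1}.
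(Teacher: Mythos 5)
Your proof is correct and follows essentially the same route as the paper's: reduce each part to the integral representations of Proposition~\ref{Th:2} and \eqref{eq:1-0} and differentiate the kernels $1/(t-x)$, the only differences being that you spell out the domination argument for differentiating under the integral (which the paper calls obvious) and that in (ii) you pass through Lemma~\ref{Le:1} rather than citing ``positive and operator decreasing'' directly via Proposition~\ref{Th:2}(ii). Your handling of the constant-function edge case in (iv) is also a reasonable reading of the paper's slightly imprecise statement.
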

\begin{proof}
(i). By Proposition\;\ref{Th:2}(ii) it is sufficient to verify this for $g(t)=\frac{1}{t-x}$, $x\le 0$, 
which is obvious.  
(ii). Since $1/f(t)$ is positive and operator decreasing, it is strongly operator convex; 
hence by (i) it is completely monotone.  
One can  show (iii) and (iv) in the same way as 
the proof of (i) by using (1) and Proposition\;\ref{Th:2}. 
\end{proof}

\section{Main results}

 \begin{Theorem}\label{Th:3}
 Let $f(t)$ be a continuous function on $J$ and $t_0\in J$. Then 
$f(t)$ is operator monotone if and only if $K_f(t, t_0)$ is strongly operator convex. 
\end{Theorem}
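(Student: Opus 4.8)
The plan is to prove both implications through the Herglotz integral representation \eqref{eq:1-0}, reading it forward to produce the strongly operator convex kernel and backward to reconstruct the operator monotone function. Throughout I write $g(t)=K_f(t,t_0)$, so that $f(t)=f(t_0)+(t-t_0)g(t)$ holds both as scalar functions and, by the functional calculus, as functions of a self-adjoint operator. The constant case is immediate: a constant $f$ is operator monotone and has $g\equiv 0\in\mathbf{SOC}(J)$ by Lemma \ref{Le:1}; so I assume $f$ non-constant. A non-constant operator monotone $f$ is strictly increasing, whence $K_f(t,t_0)>0$ on $J$, which matches the positivity demanded by the characterizations of Proposition \ref{Th:1} and Lemma \ref{Le:1}.

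First I would prove the forward implication. Assuming $f$ operator monotone, $f$ is real-analytic and admits \eqref{eq:1-0} with $\beta\ge 0$ and $\nu$ supported on $\mathbb{R}\setminus J$ (recall $\nu(J)=0$). Using $\frac{1}{x-t}-\frac{1}{x-t_0}=\frac{t-t_0}{(x-t)(x-t_0)}$ and dividing by $t-t_0$ gives $g(t)=\beta+\int\frac{d\nu(x)}{(x-t)(x-t_0)}$. For $x\le a$ the integrand equals $\frac{1}{t_0-x}\cdot\frac{1}{t-x}$, a positive multiple of $\frac{1}{t-x}$, which is positive and operator decreasing on $(a,\infty)$ and hence lies in $\mathbf{SOC}(a,\infty)$ by Proposition \ref{Th:2}(ii), so in $\mathbf{SOC}(J)$ upon restriction; for $x\ge b$ it equals $\frac{1}{x-t_0}\cdot\frac{1}{x-t}$, a positive multiple of the positive operator monotone function $\frac{1}{x-t}$, which lies in $\mathbf{SOC}(-\infty,b)$ by Proposition \ref{Th:2}(iii), hence in $\mathbf{SOC}(J)$. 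Since the nonnegative constant $\beta$ is in $\mathbf{SOC}(J)$ and the defining inequality $Pg(PAP)P\le g(A)$ is preserved under nonnegative linear combinations and under the convergent integral in $x$, I conclude $g=K_f(\cdot,t_0)\in\mathbf{SOC}(J)$.

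For the converse I would run the same computation backward. Assuming $g=K_f(\cdot,t_0)\in\mathbf{SOC}(J)$, the function $g$ is operator convex and in particular real-analytic, so $f(t)=f(t_0)+(t-t_0)g(t)$ is smooth. The goal is to produce for $g$ an integral representation $g(t)=\beta+\int\frac{d\mu(x)}{(x-t)(x-t_0)}$ with $\beta\ge 0$ and $\mu\ge 0$ supported on $\mathbb{R}\setminus J$; granting this, multiplying by $t-t_0$ and adding $f(t_0)$ returns exactly \eqref{eq:1-0} with $\alpha=f(t_0)$ and the same nonnegative $\beta$, so $f$ is a Pick function and therefore operator monotone on $J$.

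The crux, and the step I expect to be the main obstacle, is thus establishing the integral representation of an arbitrary SOC function on $J$ with its measure supported off $J$. On a half-line this is precisely the content of Proposition \ref{Th:2}(ii),(iii), so one route is to reduce the bounded-interval case to the two half-line cases by splitting $g=g_+ + g_-$ with $g_+\in\mathbf{SOC}(a,\infty)$ and $g_-\in\mathbf{SOC}(-\infty,b)$, mirroring the decomposition $f=f_+ + f_-$ recalled before Proposition \ref{Th:2}. A second, more self-contained route passes through Lemma \ref{Le:1}: since $1/g$ is then positive and operator concave on $J$, one invokes the classical integral representation of operator concave functions on an interval and inverts it to recover the displayed form for $g$. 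Either way, once the representation is in hand the reconstruction of \eqref{eq:1-0} is purely formal, so the whole theorem rests on transporting the half-line representations of Proposition \ref{Th:2} to a general interval.
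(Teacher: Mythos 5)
Your forward implication is sound and is essentially the paper's argument in different clothing: splitting the Herglotz measure of $f$ into its parts on $(-\infty,a]$ and $[b,\infty)$ amounts to the paper's decomposition $f=f_++f_-$ into half-line operator monotone functions, after which Proposition \ref{Th:2} finishes (you do still owe the routine check that the inequality $Pg(PAP)P\le g(A)$ passes through a locally uniformly convergent integral). The converse, however, has a genuine gap, and it sits exactly where you locate it. For a finite interval you reduce the theorem to the claim that every $g\in {\bf SOC}(a,b)$ has a representation $\beta+\int\frac{d\mu(x)}{(x-t)(x-t_0)}$ with $\mu$ supported off $J$, equivalently that $g$ splits as $g_++g_-$ with $g_+\in{\bf SOC}(a,\infty)$ and $g_-\in{\bf SOC}(-\infty,b)$. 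But that statement is Corollary \ref{Cor:3-3}, which this paper \emph{derives from} Theorem \ref{Th:3} (multiply by $t-c$, apply the theorem to get an operator monotone function, decompose that); so your reduction is to a statement of essentially the same strength as the theorem, and neither of your proposed routes supplies a proof. Route 1 merely restates the needed decomposition: the splitting $f=f_+ + f_-$ you want to mirror exists for Pick functions because of the Herglotz representation, and no analogous tool for SOC functions on a finite interval is available before the theorem is proved. Route 2 founders because ``inverting'' an integral representation is not an operation: from a representation of the operator concave function $1/g$ there is no formal way to produce a Cauchy-type representation of its reciprocal.

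What the paper actually does with $h:=1/K_f(\cdot,t_0)$ --- your Route 2 starting point --- is the key idea you are missing. Since $h$ is operator concave (Lemma \ref{Le:1}), $K_h(t,t_0)$ is operator decreasing, and the identity
\begin{equation*}
\frac{1}{f(t)-f(t_0)}=K_h(t,t_0)+\frac{1}{f'(t_0)}\,\frac{1}{t-t_0}\qquad(t\ne t_0)
\end{equation*}
shows that $1/(f(t)-f(t_0))$ is operator decreasing on $(a,t_0)$ and on $(t_0,b)$ separately, hence $f$ is operator monotone on each subinterval; the holomorphic extension of $f$ (available because $K_f$ is operator convex) and analytic continuation then force $f$ to be a Pick function on all of $J$. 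This bypasses any representation theorem for SOC functions on finite intervals. Finally, your sketch never addresses the case where $t_0$ is an endpoint of $J$, which the statement permits and the paper handles by a separate limiting argument.
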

\begin{proof}
We may assume $f$ is $C^1$ on the interior of $J$, since both conditions imply this.  
 We first consider the case where $J=(a, \infty)$.  Assume $f(t)\in {\bf P}(J)$. Then $f(t)$ is operator 
concave, i.e., $-f$ is operator convex. This implies that $K_f(t, t_0)$ is 
operator decreasing on $(a, \infty)$ for each $t_0\in J$. By Proposition\;\ref{Th:2}(ii)  
$K_f(t, t_0)\in${\bf SOC}$(a, \infty)$. Assume conversely $K_f(t, t_0)\in${\bf SOC}$(a, \infty)$. 
If $K_f(t, t_0)\equiv 0$, then $f(t)$ itself is constant and hence operator monotone. If 
$K_f(t, t_0)\not\equiv 0$, then $K_f(t, t_0)>0$ for every $t$ and operator decreasing.  
Since $K_{-f}(t, t_0)$ is operator monotone, $-f$ is operator convex; and hence $f$ is operator concave.  So $f$ turns out to be operator monotone after all. 
The case where $J=(-\infty, b)$ can be shown 
in the analogous way, 
so we next consider the case where $J$ is a finite interval $(a, b)$.
  Assume $f\in{\bf P}(a, b)$ and decompose it as $f(t)=f_+(t) + f_-(t)$, as 
mentioned above Proposition\;\ref{Th:2}. 
Then $K_{f_+}(t, t_0)\in${\bf SOC}$(a, \infty)$,  $K_{f_-}(t, t_0)\in${\bf SOC}$(-\infty, b)$ 
for $t_0\in (a, b)$. We therefore get 
$K_{f}(t, t_0)= K_{f_+}(t, t_0) + K_{f_-}(t, t_0)\in${\bf SOC}$(a, b)$. 
 Assume conversely $K_{f}(t, t_0)\in${\bf SOC}$(a, b)$. 
We may assume $K_{f}(t, t_0)>0$ and hence $f'(t_0)>0$. Note that $f(t)$ has a 
holomorphic extension $f(z)$ to $J\cup \Pi_+ \cup \Pi_-$ since  $K_{f}(t, t_0)$ 
is operator convex. 
Since $h(t)\!:=1/K_{f}(t, t_0)$ is operator concave on $(a, b)$, $K_{h}(t, t_0)$ is 
operator decreasing. From the formula 
$$\frac{1}{f(t)- f(t_0)}=  K_{h}(t, t_0) + \frac{1}{f'(t_0)}\frac{1}{t-t_0}\quad (t\ne t_0),$$
by taking account of $f' (t_0)>0$, it follows that $\frac{1}{f(t)- f(t_0)}$ is operator 
decreasing on $(a, t_0)$ and $(t_0, b)$; hence $f(t)$ is operator monotone 
on $(a, t_0)$ and $(t_0, b)$. Thus the holomorphic function $f(z)$ on
 $J\cup \Pi_+ \cup \Pi_-$ must be a Pick function. Therefore $f(t)\in {\bf P}(J)$. 
Since a continuous function on $J$ is in ${\bf P}(J)$ or ${\bf SOC}(J)$ if and only if its restriction to the interior is in the correct class, it remains only to consider the case where $t_0$ is an endpoint of $J$.  
 We show just the following, because the other cases could be similarly 
shown. \\
\it Let $f(t)$ be continuous on $[a, b)$ and differentiable on $(a, b)$. Then 
$f(t)\in {\bf P}[a, b)$ if and only if $K_f (t, a)\in ${\bf SOC}$(a, b)$. \\
\rm Assume $f(t)\in {\bf P}[a, b)$. Then for every $c\in (a, b)$,  
$K_f (t, c)\!\in ${\bf SOC}$(a, b)$. Since $K_f (t, c)$ converges to $K_f (t, a)$ as 
$c\to a+0$ uniformly on every compact interval in $(a, b)$,  
$K_f (t, a)\in ${\bf SOC}$(a, b)$. 
Assume conversely $K_f (t, a)\in ${\bf SOC}$(a, b)$. If $K_f (t, a)\equiv 0$, then 
$f(t)$ is constant on $(a, b)$ and hence on $[a, b)$. So we need to consider 
only the case where $K_{f}(t, a)>0$ for every $t$. Since $h(t):=1/K_{f}(t, a)$ is 
operator concave on $(a, b)$, $K_{h}(t, t_1)$ is 
operator decreasing for every $t_1\in(a, b)$. 
Since 
$$K_{h}(t, t_1)= \frac{1}{f(t)- f(a)}\frac{t-a}{t-t_1} - \frac{1}{K_f (t_1, a)}\frac{1}{t-t_1}
\quad (t\ne t_1),$$
by taking account of $K_f (t_1, a)>0$ we see that $\frac{1}{f(t)- f(a)}\frac{t-a}{t-t_1}$
is operator decreasing on $(t_1, b)$.
Since $\frac{1}{f(t)- f(a)}\frac{t-a}{t-t_1} \to \frac{1}{f(t)- f(a)}$ as $t_1 \to a$,
 $\frac{1}{f(t)- f(a)}$ is operator decreasing on $(a + \epsilon, b)$ for arbitrary  
$\epsilon>0$, and hence on $(a, b)$. This consequently yields $f(t)\in {\bf P}[a, b)$. 
\end{proof}

\begin{Exm}\rm
It is well-known that $\tan t$ is operator monotone on $(-\frac{\pi}{2}, \frac{\pi}{2})$, 
but it has not been known even whether $\frac{\tan t}{t}$ is operator convex so far. 
However, by the above theorem, $\frac{\tan t}{t}$
 is strongly operator convex.
\end{Exm}

By using part of L\"{o}wner's Theorem, we can derive a differential criterion for strong operator convexity which is simpler than the one in \cite{B-2}.

\begin{Corollary}\label{Cor:3-1}
If $g$ is a continuous function on $J$ which is $C^1$ on the interior $J^{\circ}$ and if $t_0\in J$, then $g$ is strongly operator convex if and only if the kernel function $C_g$ is positive semi-definite on $J^{\circ}$, where 
$$C_g(t,s)=\frac{(t-t_0)g(t)-(s-t_0)g(s)}{t-s}\quad(t\ne s), \quad C_g(t,t)=g(t)+(t-t_0)g'(t).$$
\end{Corollary}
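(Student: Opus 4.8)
The plan is to reduce the statement to the combination of Theorem~\ref{Th:3} and the L\"owner theorem by means of a single substitution. I would set $f(t) := (t-t_0)g(t)$, which is continuous on $J$ and $C^1$ on $J^{\circ}$ because $g$ is. The whole point of this choice is that the two kernels appearing in the corollary become the classical L\"owner kernel of $f$: since $f(t_0)=0$, one has $K_f(t,t_0)=\frac{f(t)-f(t_0)}{t-t_0}=g(t)$, while for $t\ne s$ a direct computation gives $K_f(t,s)=\frac{(t-t_0)g(t)-(s-t_0)g(s)}{t-s}=C_g(t,s)$, and on the diagonal $K_f(t,t)=f'(t)=g(t)+(t-t_0)g'(t)=C_g(t,t)$. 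Thus $C_g=K_f$ and $g=K_f(\cdot,t_0)$.

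With these identities in hand the argument is essentially immediate. First, by Theorem~\ref{Th:3} applied to the continuous function $f$ and the point $t_0\in J$, the function $K_f(\cdot,t_0)=g$ is strongly operator convex if and only if $f$ is operator monotone on $J$. Second, I would invoke the kernel characterization part of the L\"owner theorem for the $C^1$ function $f$ on the open interval $J^{\circ}$: there $f$ is operator monotone exactly when its L\"owner kernel $K_f=C_g$ is positive semi-definite. Chaining these two equivalences gives that $g\in{\bf SOC}(J)$ if and only if $C_g$ is positive semi-definite on $J^{\circ}$, which is the assertion.

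The only genuinely delicate point, and the step I would handle most carefully, is the passage between the interval $J$, on which strong operator convexity and operator monotonicity are phrased and which may contain one or both endpoints (including the possibility that $t_0$ is an endpoint), and the open interval $J^{\circ}$ on which the L\"owner kernel criterion is available. This is precisely the reduction already used at the end of the proof of Theorem~\ref{Th:3}: membership in ${\bf P}$ or ${\bf SOC}$ on $J$ is determined by the restriction to $J^{\circ}$, so that $f\in{\bf P}(J)$ if and only if $f$ is operator monotone on $J^{\circ}$, and no information is lost when $C_g$ is tested only on $J^{\circ}$. Beyond this bookkeeping the proof needs nothing further; the substitution $f(t)=(t-t_0)g(t)$ does all the work, simultaneously converting strong operator convexity of $g$ into operator monotonicity of $f$ and the kernel $C_g$ into the ordinary L\"owner kernel.
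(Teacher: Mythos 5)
Your proposal is correct and is essentially identical to the paper's own (one-line) proof: the paper also applies Theorem~\ref{Th:3} to $f(t)=(t-t_0)g(t)$ and then invokes the kernel part of L\"owner's theorem, exactly as you do. Your additional remarks on the identities $K_f(\cdot,t_0)=g$, $K_f=C_g$, and on passing between $J$ and $J^{\circ}$ simply make explicit the details the paper leaves to the reader.
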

\begin{proof} Apply the theorem to $(t-t_0)g(t)$.
\end{proof}

\begin{Corollary}\label{Cor:3-2}
Let $J$ be an open interval. If $g_n \in {\bf SOC}(J)$ and $g_n(t)$ converges pointwise to $g(t)$ as 
$n\to \infty$, then $g\in {\bf SOC}(J)$.    
\end{Corollary}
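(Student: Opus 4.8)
The plan is to use the defining operator inequality $P\,g(PAP)\,P \le g(A)$ directly and to pass it to the limit; the only genuine work will be to upgrade the pointwise convergence $g_n \to g$ to locally uniform convergence, which then yields norm convergence of the relevant functional calculus expressions.

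First I would record that every $g_n \in {\bf SOC}(J)$ is operator convex, hence convex in the ordinary (scalar) sense, and therefore continuous on the open interval $J$. Since the $g_n$ converge pointwise to the finite function $g$, standard convex analysis applies: on any compact subinterval $[c,d]\subset J$ the functions $g_n$ are uniformly Lipschitz, because their one-sided difference quotients are controlled by the values of $g_n$ at two fixed points of a slightly larger subinterval, and those values converge and hence are bounded. Thus $\{g_n\}$ is equicontinuous on $[c,d]$, and equicontinuity together with pointwise convergence (Arzel\`a--Ascoli) forces $g_n \to g$ uniformly on $[c,d]$. In particular $g$ is continuous (indeed convex) on $J$, so $g(A)$ makes sense.

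Next I would fix a bounded self-adjoint $A$ with spectrum in $J$ and an orthogonal projection $P$. The spectrum of $A$ is a compact subset of the open interval $J$, so it lies in some $[c,d]\subset J$; since $c\le \langle Ax,x\rangle \le d$ for unit vectors $x$ in the range of $P$, the compression $PAP$ (viewed on that range) also has spectrum in $[c,d]$. Applying the uniform convergence $g_n\to g$ on $[c,d]$ through the continuous functional calculus gives $\|g_n(A)-g(A)\|\to 0$ and $\|g_n(PAP)-g(PAP)\|\to 0$, whence $P\,g_n(PAP)\,P \to P\,g(PAP)\,P$ in norm.

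Finally, for each $n$ we have $P\,g_n(PAP)\,P \le g_n(A)$ because $g_n\in{\bf SOC}(J)$. Norm limits preserve the operator order, so letting $n\to\infty$ yields $P\,g(PAP)\,P \le g(A)$; as $A$ and $P$ were arbitrary, $g\in{\bf SOC}(J)$. I expect the main obstacle to be the first step---promoting pointwise to locally uniform convergence---and this is precisely where operator convexity (through ordinary convexity) is indispensable, since for an arbitrary sequence of functions pointwise convergence does not transfer through the functional calculus.
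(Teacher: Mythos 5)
Your proof is correct, but it takes a genuinely different route from the paper's. The paper's proof is a two-line reduction through Theorem 3.1: fix $c\in J$, note that $f_n(t):=(t-c)g_n(t)$ is operator monotone by that theorem, observe $f_n\to f:=(t-c)g(t)$ pointwise, invoke the classical fact that the operator monotone class ${\bf P}(J)$ is closed under pointwise limits, and apply Theorem 3.1 once more to get $g=K_f(\cdot,c)\in{\bf SOC}(J)$. You instead work directly with the defining inequality $P\,g_n(PAP)\,P\le g_n(A)$: you upgrade pointwise to locally uniform convergence using the standard convex-analysis fact (uniform Lipschitz bounds from monotonicity of difference quotients, then equicontinuity plus pointwise convergence), push the convergence through the continuous functional calculus, and pass to the limit using closedness of the operator order under norm limits. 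What each buys: your argument is self-contained and elementary, needing neither Theorem 3.1 nor the (nontrivial, though classical) closure of ${\bf P}(J)$ under pointwise limits -- only ordinary convexity of the $g_n$ and the definition of ${\bf SOC}$; the paper's argument is much shorter given the machinery already built, and it illustrates the paper's theme of transferring statements between ${\bf P}$ and ${\bf SOC}$. Two cosmetic remarks: your Lipschitz bound on $[c,d]$ actually requires reference values of $g_n$ at points on both sides (four points, two below $c$ and two above $d$, so to speak), not two; and you correctly handled the point that in $g(PAP)$ only the spectrum of $PAP$ restricted to the range of $P$ matters, which many direct arguments overlook.
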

\begin{proof} Take $c\in J$. Since 
$f_n(t)\!:= g_n(t) (t-c)$ is operator monotone and 
converges pointwise to  $f(t)\!:= g(t) (t- c)$, 
$f(t)$ is operator monotone; hence $g\in {\bf SOC}(J)$.
\end{proof}

\begin{Corollary}\label{Cor:3-3}\rm (\cite{B-2})\it \; 
Let $g(t)\in {\bf SOC}(a, b)$. Then there is a decomposition of $g(t)$ such that 
$g(t)= g_+(t) + g_-(t)$ for $t\in (a, b)$, where $g_+(t)\in {\bf SOC}(a, \infty)$ and $g_-(t) \in {\bf SOC}(-\infty, b)$.
In this case $g(t)$ is represented as 
$$g(t)= \alpha + \int_{(-\infty, a]}\frac{1}{t-x}d\nu_- (x) + \int_{[b, \infty)}\frac{1}{x-t}d\nu_+ (x), $$
where $\alpha \geq 0$, $\int_{(-\infty, a]}\frac{1}{|x| + 1}d\nu_- (x)<\infty$, 
$\int_{[b, \infty)}\frac{1}{|x| + 1}d\nu_+ (x)<\infty$. 
\end{Corollary}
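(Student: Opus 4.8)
The plan is to reduce the statement to the already-known decomposition of operator monotone functions by means of Theorem \ref{Th:3}. Fix a point $c\in(a,b)$ and set $f(t):=(t-c)g(t)$. Since $f(c)=0$, we have $K_f(t,c)=g(t)$, which belongs to ${\bf SOC}(a,b)$ by hypothesis; hence Theorem \ref{Th:3} gives $f\in{\bf P}(a,b)$. This is the key move: multiplying by $(t-c)$ converts the strongly operator convex function $g$ into an operator monotone function $f$ whose L\"{o}wner kernel at $c$ recovers $g$.

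Next I would invoke the decomposition of operator monotone functions recalled just before Proposition \ref{Th:2}: write $f=f_++f_-$ with $f_+\in{\bf P}(a,\infty)$ and $f_-\in{\bf P}(-\infty,b)$. Define $g_\pm(t):=K_{f_\pm}(t,c)$. Applying Theorem \ref{Th:3} on each infinite interval (note that $c$ lies in the interior of both) yields $g_+\in{\bf SOC}(a,\infty)$ and $g_-\in{\bf SOC}(-\infty,b)$. Because $f(c)=f_+(c)+f_-(c)=0$, summing the two kernels gives
$$g_+(t)+g_-(t)=\frac{f_+(t)+f_-(t)-f_+(c)-f_-(c)}{t-c}=\frac{f(t)-f(c)}{t-c}=g(t),$$
so $g=g_++g_-$ is the desired decomposition.

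For the integral representation I would apply Proposition \ref{Th:2}(ii) to $g_+$ and Proposition \ref{Th:2}(iii) to $g_-$, obtaining
$$g_+(t)=g_+(\infty)+\int_{(-\infty,a]}\frac{1}{t-x}\,d\nu_-(x),\qquad g_-(t)=g_-(-\infty)+\int_{[b,\infty)}\frac{1}{x-t}\,d\nu_+(x),$$
together with the stated finiteness of $\int(|x|+1)^{-1}\,d\nu_\pm$. Adding these and setting $\alpha:=g_+(\infty)+g_-(-\infty)$ produces the claimed formula.

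The only point requiring care is the nonnegativity $\alpha\geq 0$ (and the existence of the two limits). By Proposition \ref{Th:2}(ii), $g_+$ is positive and operator decreasing, so $g_+(\infty)=\lim_{t\to\infty}g_+(t)\geq 0$; by Proposition \ref{Th:2}(iii), $g_-$ is positive and operator monotone, so $g_-(-\infty)=\lim_{t\to-\infty}g_-(t)\geq 0$. Hence $\alpha\geq 0$. The degenerate cases, in which $g$ or one of $g_\pm$ is constant, are absorbed by allowing $\nu_\pm=0$; in particular $g\equiv 0$ corresponds to $\alpha=0$ and $\nu_\pm=0$. I expect the main subtlety to be the bookkeeping of the additive constants $f_\pm(c)$, so that they cancel and the two limit constants combine correctly into a single nonnegative $\alpha$, rather than any deep analytic difficulty.
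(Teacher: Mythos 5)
Your proof is correct and takes essentially the same route as the paper's: fix $c\in(a,b)$, pass to $f(t)=(t-c)g(t)\in{\bf P}(a,b)$ via Theorem \ref{Th:3}, decompose $f=f_++f_-$ with $f_+\in{\bf P}(a,\infty)$, $f_-\in{\bf P}(-\infty,b)$, set $g_\pm=K_{f_\pm}(\cdot,c)$, and then obtain the integral formula from Proposition \ref{Th:2}. The only difference is that you make explicit what the paper calls obvious, namely the cancellation $f_+(c)+f_-(c)=f(c)=0$ ensuring $g_++g_-=g$, and the nonnegativity $\alpha=g_+(\infty)+g_-(-\infty)\geq 0$.
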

\begin{proof}
Take $c\in (a, b)$ and put $f(t)= g(t)(t-c)\in 
{\bf P}(a, b)$.  Decompose $f(t)$ as $f(t)= f_+(t) + f_-(t)$, where 
$f_+\in{\bf P}(a, \infty)$, $f_-(t) \in {\bf P}(-\infty, b)$ and put
$g_+(t)\!:=\frac{f_+(t) -f_+(c)}{t-c} $ for $t\in (a, \infty)$ and 
$g_-(t)\!:=\frac{f_-(t) -f_-(c)}{t-c} $ for $t\in (-\infty, b)$.
Then $g_+(t)$ and $g_-(t)$ obviously satisfy the required properties. 
In view of Proposition\;\ref{Th:2}, one can get the integral representation.  
\end{proof}  
The next proposition is probably known to some people, but we haven't seen a proof, and we are including it for expository purposes.

\begin{Proposition}\label{Th:10}
Let $f(t)$ be a function on a finite interval $(a, b)$. Then
\begin{itemize}
\item[(i)]
If $f$ is operator concave and operator monotone on $(a, b)$, then 
$f$ has an extension $\tilde{f}$ to $(a, \infty)$ such that $\tilde{f}$ is 
operator concave and operator monotone on $(a, \infty)$. Further, 
if $\tilde{f}(\infty) < \infty$, then $-\tilde{f} + \tilde{f}(\infty)\in{\bf SOC}(a, \infty)$
 and hence $-f + \tilde{f}(\infty)\in{\bf SOC}(a, b)$.
\item[(ii)] 
If $f$ is operator convex and operator decreasing on $(a, b)$, then 
$f$ has an extension $\tilde{f}$ to $(a, \infty)$ such that $\tilde{f}$ is 
operator convex and operator decreasing on $(a, \infty)$. Further, 
if $\tilde{f}(\infty) > -\infty$, then $\tilde{f} -\tilde{f}(\infty)\in{\bf SOC}(a, \infty)$
 and hence $f - \tilde{f}(\infty)\in{\bf SOC}(a, b)$.
\item[(iii)]
If $f$ is operator convex and operator monotone on $(a, b)$, then 
$f$ has an extension $\tilde{f}$ to $(-\infty, b)$ such that $\tilde{f}$ is 
operator convex and operator monotone on $(-\infty, b)$. Further, 
if $\tilde{f}(-\infty) >-\infty$, then $\tilde{f} - \tilde{f}(-\infty)\in{\bf SOC}(-\infty, b)$
 and hence $f - \tilde{f}(-\infty)\in{\bf SOC}(a, b)$.
\item[(iv)] 
If $f$ is operator concave and operator decreasing on $(a, b)$, then 
$f$ has an extension $\tilde{f}$ to $(-\infty, b)$ such that $\tilde{f}$ is 
 operator concave and operator decreasing on $(-\infty, b)$. Further, 
if $\tilde{f}(-\infty) < \infty$, then $-\tilde{f} + \tilde{f}(-\infty)\in{\bf SOC}(-\infty, b)$
 and hence $-f + \tilde{f}(-\infty)\in{\bf SOC}(a, b)$.
\end{itemize}
\end{Proposition}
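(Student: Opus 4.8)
The plan is to prove part (i) in full and then deduce (ii), (iii), (iv) from it by symmetry. Parts (i) and (ii) are interchanged by $f\mapsto -f$, which swaps operator convex/concave with operator monotone/decreasing and sends $\tilde f(\infty)$ to $-\tilde f(\infty)$; the reflection $f\mapsto f(-\,\cdot\,)$ carries $(a,b)$ to $(-b,-a)$, swaps operator monotone with operator decreasing, preserves convexity/concavity and the class ${\bf SOC}$, and interchanges the two infinite ends. Composing these two involutions one passes from (i) to each of the others. Concretely, I would establish (i) and then note that (ii) follows by applying (i) to $-f$, (iii) by applying (ii) to $f(-\,\cdot\,)$, and (iv) by applying (iii) to $-f$, checking each time that the hypotheses and the one-sided limit condition transform as stated; all of this is immediate from the definitions and needs no further idea.

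For the extension in (i), fix $t_0\in(a,b)$ and assume $f$ is operator concave and operator monotone on $(a,b)$. Since $f$ is operator monotone, Theorem \ref{Th:3} gives $K_f(\cdot,t_0)\in{\bf SOC}(a,b)$; since $-f$ is operator convex, the Bendat--Sherman criterion makes $K_{-f}(\cdot,t_0)=-K_f(\cdot,t_0)$ operator monotone, i.e. $g:=K_f(\cdot,t_0)$ is operator decreasing on $(a,b)$. By Corollary \ref{Cor:3-3},
\[
g(t)=\alpha+\int_{(-\infty,a]}\frac{1}{t-x}\,d\nu_-(x)+\int_{[b,\infty)}\frac{1}{x-t}\,d\nu_+(x),\qquad \alpha\ge 0 .
\]
Here the last integral is operator monotone on $(a,b)$ while the first two summands are operator decreasing there. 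Because $g$ itself is operator decreasing, the function $-g$ is operator monotone on $(a,b)$, and comparing the displayed formula with the (unique) Nevanlinna representation of $-g$ shows that the part of its representing measure supported on $[b,\infty)$ equals $-\nu_+$ and hence must be nonnegative; as $\nu_+\ge 0$, this forces $\nu_+=0$. Then $g(t)=\alpha+\int_{(-\infty,a]}(t-x)^{-1}\,d\nu_-(x)$ already defines a function $\tilde g$ on $(a,\infty)$ which lies in ${\bf SOC}(a,\infty)$ by Proposition \ref{Th:2}(ii). Setting $\tilde f(t):=f(t_0)+(t-t_0)\tilde g(t)$ yields an extension of $f$ with $K_{\tilde f}(\cdot,t_0)=\tilde g\in{\bf SOC}(a,\infty)$, so Theorem \ref{Th:3} applied on $(a,\infty)$ makes $\tilde f$ operator monotone, and therefore operator concave on $(a,\infty)$ by the equivalence recalled just before Proposition \ref{Th:2}.

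For the remaining assertion of (i), suppose $\tilde f(\infty)<\infty$. If $\tilde f$ is constant the claim is trivial; otherwise $\tilde f$ increases strictly to its finite limit, so $-\tilde f+\tilde f(\infty)>0$ on $(a,\infty)$, and it is operator decreasing because $\tilde f$ is operator monotone. By Proposition \ref{Th:2}(ii) this function lies in ${\bf SOC}(a,\infty)$, and restricting its defining operator inequality to operators with spectrum in $(a,b)$ shows that $-f+\tilde f(\infty)\in{\bf SOC}(a,b)$, as required.

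The step I expect to be the main obstacle is the vanishing $\nu_+=0$, i.e. that operator concavity truly removes the contribution of the representation coming from the right of the interval; everything else is bookkeeping with Theorem \ref{Th:3}, Corollary \ref{Cor:3-3}, and Proposition \ref{Th:2}. I would settle it through uniqueness of the representing measure of a Pick function, equivalently by reading the boundary measure of $-g$ off $\tfrac1\pi\,\mathrm{Im}\,(-g(x+i\varepsilon))$ near points $x>b$, where the operator monotone summand forces a nonpositive density while Pick positivity forces a nonnegative one. The only side points needing care are that operator monotone functions on an open interval are automatically $C^1$, so that $K_f$ is defined and Theorem \ref{Th:3} applies, and that ${\bf SOC}$ and the convexity/monotonicity classes transform correctly under $f\mapsto -f$ and $f\mapsto f(-\,\cdot\,)$, both of which are read directly from the definitions.
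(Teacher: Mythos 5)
Your argument is correct, but it reaches the conclusion by a genuinely different route from the paper's. The paper works with $f$ itself: since $f$ is operator monotone it has the Nevanlinna representation \eqref{eq:1-0}, and since it is also operator concave it has the second representation \eqref{eq:7} (with the $rt^2$ term and measures $\nu_\pm$ supported off $(a,b)$); comparing the two via the Poisson-kernel uniqueness theorem shows that $\nu_+=0$ and that $\nu$ in \eqref{eq:1-0} vanishes on $[b,\infty)$, so formula \eqref{eq:1-0} itself furnishes the extension to $(a,\infty)$, after which the paper declares the rest clear and parts (ii)--(iv) similar. You instead pass to the divided difference $g=K_f(\cdot,t_0)$, combine Theorem \ref{Th:3} (giving $g\in{\bf SOC}(a,b)$) with the Bendat--Sherman criterion (giving that $g$ is operator decreasing), take the representation of Corollary \ref{Cor:3-3}, and kill $\nu_+$ by the same uniqueness/positivity principle, now applied to the Pick function $-g$; Proposition \ref{Th:2}(ii) then hands you the extension $\tilde g$, and $\tilde f=f(t_0)+(t-t_0)\tilde g(t)$ is operator monotone by Theorem \ref{Th:3} again, hence operator concave on the half-line. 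The crux---no representing mass on $[b,\infty)$, by uniqueness of the Herglotz boundary measure---is identical in both arguments, but your version stays entirely inside the paper's own toolkit (Theorem \ref{Th:3}, Corollary \ref{Cor:3-3}, Proposition \ref{Th:2}) and never needs the operator-convexity representation \eqref{eq:7}, which the paper introduces only for this proof; you also make explicit the ``further'' assertions (via strict monotonicity and Proposition \ref{Th:2}(ii)) and the reductions of (ii)--(iv) to (i) under $f\mapsto -f$ and $t\mapsto -t$, which the paper leaves to the reader. What the paper's approach buys in exchange is brevity and directness: it identifies the representing measure of $f$ itself rather than that of $K_f(\cdot,t_0)$, with no detour through divided differences.
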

\begin{proof}
To prove (i) we use the integral representation of the operator monotone function $f$ given in (1) above.  But since $f$ is also operator concave, there is another integral representation:
\begin{equation}\label{eq:7}
f(t)=p+qt+rt^2+\int_{-\infty}^{a}(\frac{1}{x-t}-\frac{x+t-2t_0}{(x-t_0)^2})d\nu_-(x)
+\int_{b}^{\infty}(\frac{1}{t-x}+\frac{x+t-2t_0}{(t_0-x)^2})d\nu_+(x),
\end{equation}
for suitable choices of the constants $p$, $q$, and $r$ and the positive measures $\nu_{\pm}$.  There is a uniqueness result for such representations based on the theory of the Poisson kernel for the upper halfplane.  Using this and comparing the two integral representations, we see that $\nu_+ =0$ and that $\nu$ vanishes on $[b,\infty)$ (and in fact $\nu =\nu_-$).  The rest of (i) is clear and the proofs of the other parts are similar.
\end{proof}

\begin{Proposition}\label{Th:4}
Let $\varphi$ be an operator monotone function on an interval $J$ and $f$ a strongly operator convex function whose range lies in $J$.

(i) If $0\in J$ and $\varphi(0) \ge 0$, then $\varphi\circ f$ is strongly operator convex.

(ii) If either $0\in J$ or $0$ is the left endpoint of $J$. then $\varphi\circ f$ is operator convex.
\end{Proposition}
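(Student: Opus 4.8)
The plan is to read off both conclusions directly from the compression descriptions of the two classes: strong operator convexity of $\varphi\circ f$ from the defining inequality $Pg(PAP)P\le g(A)$, and operator convexity from Davis's inequality $Pg(PAP)P\le Pg(A)P$. Throughout I fix an orthogonal projection $P$ and a self-adjoint $A$ whose spectrum lies in the domain of $f$, and I write $A_P$ for the compression $PAP$ regarded as an operator on $P{\bf H}$, so that in the decomposition ${\bf H}=P{\bf H}\oplus(I-P){\bf H}$ one has $Pf(PAP)P=f(A_P)\oplus 0$ and $P(\varphi\circ f)(PAP)P=\varphi(f(A_P))\oplus 0$. Since the range of $f$ lies in $J$, the operators $f(A)$ and $f(A_P)$ have spectra in $J$, and the strong operator convexity of $f$ is exactly the statement
\begin{equation*}
f(A_P)\oplus 0\le f(A).
\end{equation*}

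First I would treat the case $0\in J$, which covers all of (i) and part of (ii). Here $f(A_P)\oplus 0$ also has its spectrum in $J$, since that spectrum is the spectrum of $f(A_P)$ together with the point $0\in J$; hence the operator monotonicity of $\varphi$ applies to the displayed inequality. The key computation is that, because $f(A_P)\oplus 0$ is block diagonal, the functional calculus gives $\varphi(f(A_P)\oplus 0)=\varphi(f(A_P))\oplus\varphi(0)I$, so that
\begin{equation*}
\varphi(f(A_P))\oplus\varphi(0)I\le\varphi(f(A)).
\end{equation*}
This single inequality drives both parts. For (i) the hypothesis $\varphi(0)\ge 0$ lets me discard the nonnegative summand $\varphi(0)I$ on $(I-P){\bf H}$, giving $\varphi(f(A_P))\oplus 0\le\varphi(f(A))$, which is precisely $P(\varphi\circ f)(PAP)P\le(\varphi\circ f)(A)$; hence $\varphi\circ f\in{\bf SOC}$. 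For (ii) with $0\in J$ I instead compress the last display by $P$: since $P$ annihilates the $(I-P){\bf H}$ summand, I obtain $\varphi(f(A_P))\oplus 0\le P\varphi(f(A))P$, that is $P(\varphi\circ f)(PAP)P\le P(\varphi\circ f)(A)P$, so $\varphi\circ f$ is operator convex by Davis's theorem, with no sign condition on $\varphi(0)$ needed.

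It remains to handle (ii) when $0$ is only the left endpoint of $J$, and this is the one point I expect to require genuine care. Then $0\notin J$, so $f(A_P)\oplus 0$ need not have spectrum in $J$, and $\varphi(0)=\varphi(0+)$ may equal $-\infty$ (as for $\varphi(t)=\log t$ or $\varphi(t)=-1/t$ on $(0,\infty)$), so the block-diagonal identity above no longer makes literal sense. The desired conclusion, however, involves $\varphi$ only on the spectra of $f(A)$ and $f(A_P)$, which lie in $J$, so I would obtain it by approximation: replace $\varphi$ by operator monotone functions $\varphi_n$ on $J$ that are finite at $0$ (for instance by truncating the representing measure of $\varphi$ near $0$) with $\varphi_n\to\varphi$ pointwise on $J$. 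Each $\varphi_n\circ f$ is operator convex by the interior case just proved, and since operator convexity is preserved under pointwise limits, so is $\varphi\circ f$. I would also remark that when $0\in J$ part (ii) follows formally from (i) by adding a constant $c\ge-\varphi(0)$ to $\varphi$ and subtracting it afterward, but the compression argument above yields both cases uniformly and is the version I would write down.
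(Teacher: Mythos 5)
Your proof is correct in its main thrust, but it follows a genuinely different route from the paper's. The paper does not argue from the compression inequalities at all: it invokes the Herglotz representation (1) of $\varphi$ (with $t_0=0$ for part (i)), reduces everything to the elementary summands $\varphi_x(t)=1/(x-t)-1/x$, resp. $g_x(t)=1/(x-t)$, and then disposes of each summand via Lemma 2.2, e.g. $\varphi_x\circ f\in{\bf SOC}$ because $-1/(\varphi_x\circ f)=x-x^2/f$ is operator convex, $1/f$ being operator concave. You instead apply the operator monotonicity of $\varphi$ directly to the defining inequality $f(A_P)\oplus 0\le f(A)$ --- legitimate because $0\in J$ forces the spectrum of $f(A_P)\oplus 0$ into $J$ --- and then either discard the block $\varphi(0)I\ge 0$ (part (i)) or compress by $P$ and quote Davis's characterization (part (ii)). For the case $0\in J$ this is more elementary and self-contained: no integral representation, no Lemma 2.2, and it shows exactly where each hypothesis enters. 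What the paper's route buys is uniformity in part (ii): when $0$ is the left endpoint of $J$ but $0\notin J$, every $x$ to the left of $J$ still satisfies $x\le 0$, so $f-x$ is still strongly operator convex and the same two-line Lemma 2.2 argument applies verbatim, with no limiting procedure.

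The one genuine flaw is in your treatment of that endpoint case. Truncating the representing measure of $\varphi$ near $0$ does not produce what you need if $\nu$ has an atom at $0$ --- which happens precisely for $\varphi(t)=-1/t$, one of the examples you cite as motivating this case. Deleting the atom ruins the pointwise convergence $\varphi_n\to\varphi$, while keeping it leaves $\varphi_n(0+)=-\infty$. The strategy is sound and easily repaired: either treat the atom separately (its contribution to $\varphi\circ f$ is $c-\nu(\{0\})/f$, which is operator convex by Lemma 2.2 because $f>0$ is strongly operator convex), or drop the truncation and instead set $\varphi_n(t)=\varphi(\lambda_n t+(1-\lambda_n)c)$ with $c\in J$ and $\lambda_n\uparrow 1$; each such $\varphi_n$ is operator monotone on $J\cup\{0\}$, finite at $0$, and converges to $\varphi$ pointwise on $J$. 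Your closing step --- operator convexity survives pointwise limits --- is fine; it is the same device the paper uses in Corollary 3.3 for operator monotone functions.
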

\begin{proof}
(i)  We use (1) for $\varphi$ with $t_0=0$.  Let $\varphi_x(t)=1/(x-t)-1/x=t/x(x-t)$ for $x\notin J$.  Since $\alpha=\varphi(0)\ge0$, it is sufficient to show $\varphi_x\circ f$ is strongly operator convex for each $x$.  By Lemma 2.2 this is equivalent to operator convexity of $-1/\varphi_x\circ f$.  Since $-1/\varphi_x\circ f = x-x^2/f$, this is clear.

(ii)  Again we use (1), but now  $t_0$ can be any point in $J$.  Since operator convexity is preserved by additive constants, it is enough to show $g_x\circ f$ is operator convex for each $x\notin J$, where $g_x(t) =1/(x-t)$.  If $x$ is to the right of $J$, this follows from the fact that $g_x$ is operator monotone and operator convex.  If $x$ is to the left of $J$, then $x\le0$, and the result follows from Lemma 2.2, since $f-x$ is strongly operator convex.
\end{proof}

\begin{Proposition}\label{Th:5}
Let $\varphi$ be a function on $J$. Then $\varphi$ is both operator convex and operator monotone 
if and only if the composite $\varphi\circ f$ is operator convex whenever $f$ is operator convex on an interval and 
the range of $f$ is contained in $J$. 
\end{Proposition}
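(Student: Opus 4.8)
The plan is to treat the two implications separately, with the forward direction being the routine composition argument and the reverse direction carrying the real content. For the forward direction, assume $\varphi$ is both operator convex and operator monotone and let $f$ be operator convex with range in $J$. Fix $0<s<1$ and self-adjoint $A,B$ with spectra in the domain of $f$. First I would use operator convexity of $f$ to write $f(sA+(1-s)B)\le sf(A)+(1-s)f(B)$; since $J$ is an interval, the right-hand side has spectrum in $J$, as does $f(sA+(1-s)B)$. Applying the operator monotone $\varphi$ to this inequality and then operator convexity of $\varphi$ gives
\begin{align*}
\varphi\bigl(f(sA+(1-s)B)\bigr) &\le \varphi\bigl(sf(A)+(1-s)f(B)\bigr) \\
&\le s\,\varphi(f(A))+(1-s)\,\varphi(f(B)),
\end{align*}
which is precisely operator convexity of $\varphi\circ f$.

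For the reverse direction, assume $\varphi\circ f$ is operator convex for every operator convex $f$ with range in $J$. Taking $f$ to be the identity on $J$ (affine, hence operator convex, with range $J$) shows at once that $\varphi=\varphi\circ\mathrm{id}$ is operator convex. It remains to prove $\varphi$ is operator monotone, and here I would exploit the full force of the hypothesis by composing with \emph{non-monotone} operator convex functions. The natural test functions are the shifted squares $f(t)=x_0+(t-c)^2$ (operator convex because $t^2$ is), restricted to a small interval about $c$ so the range lies in $J$, with $x_0$ an interior point of $J$. Evaluating operator midpoint convexity of $\varphi\circ f$ at the pair $cI\pm K$ collapses the inner variable, since $f(cI\pm K)=x_0 I+K^2$ and $f(cI)=x_0 I$, and yields $\varphi(x_0 I)\le\varphi(x_0 I+K^2)$ for all self-adjoint $K$ with $\norm{K}$ small; as $K^2$ ranges over small positive operators this gives operator monotonicity whenever the lower operator is scalar. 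A symmetric choice of self-adjoint square roots $T_1,T_2$ with $T_1^2=T_2^2$ but $T_1\ne T_2$ on a degenerate eigenspace produces comparisons in which the \emph{upper} operator is scalar.

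The main obstacle is promoting these scalar-anchored comparisons to full operator monotonicity for an arbitrary pair $A\le B$ of non-commuting operators. The difficulty is structural: in any infinitesimal test of the above form, killing the first derivative $Df(Y)[K]$ forces the curvature term $D^2f(Y)[K,K]$ to commute with $f(Y)$, so quadratic test functions only reach configurations in which the relevant operators commute. I see two ways to finish. One is to enlarge the family of operator convex test functions, using the resolvents $1/(x-t)$ and their integrals together with the non-uniqueness of self-adjoint square roots on degenerate eigenspaces, so as to realize genuinely non-commuting data $f(T_1)=f(T_2)=B$ and $f(\tfrac{T_1+T_2}{2})=A$; then operator midpoint convexity of $\varphi\circ f$ gives $\varphi(A)\le\varphi(B)$ directly.

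The alternative, and probably cleaner, route is analytic. Since $\varphi$ is operator convex it already extends holomorphically to $J\cup\Pi_+\cup\Pi_-$ (as recorded in the Introduction), so by Loewner's theorem it suffices to show this extension is a Pick function, i.e. $\mathrm{Im}\,\varphi\ge0$ on $\Pi_+$ (the constant case being trivially monotone). I expect the composition hypothesis, applied to operator convex test functions that are themselves Pick and map $\Pi_+$ into a sub-region of $J\cup\Pi_+$, to be exactly what pins down this sign; establishing $\mathrm{Im}\,\varphi\ge0$ on $\Pi_+$ is the crux of the argument, after which operator monotonicity follows immediately.
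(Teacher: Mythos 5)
Your forward direction is fine (it is the routine argument the paper dismisses as ``clear''), and taking $f=\mathrm{id}$ to obtain operator convexity of $\varphi$ is also how the paper begins the converse. But the rest of the converse --- operator monotonicity of $\varphi$ --- is where all the content lies, and your proposal does not prove it. The quadratic tests $f(t)=x_0+(t-c)^2$ yield only the scalar-anchored inequalities $\varphi(x_0I)\le\varphi(x_0I+K^2)$ (plus some commuting comparisons from equal squares $T_1^2=T_2^2$, since self-adjoint square roots of a positive operator commute with it), and such inequalities are genuinely too weak: $\varphi(t)=t^2$ on $[0,1]$ is operator convex, satisfies $\varphi(x_0I)\le\varphi(x_0I+H)$ for all positive $H$ and all $x_0\ge 0$, and is monotone on every commuting pair, yet it is not operator monotone. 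You correctly identify this obstacle, but neither of your proposed completions is carried out: route (a) --- realizing genuinely non-commuting data $f(T_1)=f(T_2)=B$, $f(\frac{T_1+T_2}{2})=A$ --- comes with no construction (and the commutation problem you yourself describe is exactly what blocks it), while route (b) ends by declaring that establishing $\mathrm{Im}\,\varphi\ge 0$ on $\Pi_+$ ``is the crux of the argument,'' i.e., with the statement still to be proved. So this is a genuine gap, not an alternative proof.

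The missing idea is a short, concrete device. After reducing to $J=[-1,1]$, the paper composes with the single M\"obius function $f(t)=-t/(t-2)=-1+2/(2-t)$, which is operator convex on the half-line $(-\infty,1)$ with range $(-1,1)\subset J$. The hypothesis then makes $\varphi\circ f$ operator convex on $(-\infty,1)$, and since $\varphi\circ f(-\infty)=\varphi(-1)$ is finite, the standard half-line characterization (an operator convex function on $(-\infty,b)$ with finite limit at $-\infty$ is operator monotone; this is the symmetric form of the fact quoted before Proposition 2.4) forces $\varphi\circ f$ to be operator monotone. Finally $f^{-1}(s)=2s/(s+1)$ is operator monotone on $(-1,1)$, so $\varphi=(\varphi\circ f)\circ f^{-1}$ is operator monotone there, and continuity gives $\varphi\in{\bf P}[-1,1]$. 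In other words, the right test function is not a non-monotone quadratic but a monotone operator convex bijection from a half-line onto $J$ with operator monotone inverse; it is the finiteness-at-infinity criterion on the half-line, not pointwise curvature tests, that converts the composition hypothesis into monotonicity.
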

\begin{proof}
The ``only if'' part is clear. To show the ``if'' part we may assume  $J=[-1, 1]$, because a function is operator monotone 
(or operator convex) on $J$ if it is operator monotone (or operator convex) on every finite closed subinterval of $J$.  
Suppose $\varphi\circ f$ is operator convex for every operator convex $f$ with the range 
in $J$. Since $f$ can be the identity function on $J$, $\varphi$ is an operator convex function, and hence 
it is continuous.  Put $f(t)= - t/(t-2)$ for $-\infty<t<1$. Since $f$ is operator convex on 
$(-\infty, 1)$,  $\varphi\circ f$ is operator convex on $(-\infty, 1)$. 
Since $\varphi\circ f(-\infty)<\infty$, it is operator monotone. 
Since $f^{-1}$ is operator monotone on $(-1, 1)$, $\varphi=(\varphi\circ f) \circ f^{-1}$ is operator monotone on 
$(-1, 1)$ as well. Since $\varphi$ is continuous, $\varphi\in {\bf P}[-1, 1]$.  
\end{proof}
\begin{Remark}\rm
It seems interesting that in Proposition 3.6 $\varphi$ need only be operator monotone whereas in Proposition 3.7 it must be both operator monotone and opeator convex.  But note that by Proposition 3.5(iii) the conditions on $\varphi$ in Proposition 3.7 could be restated without explicitly mentioning operator convexity.  Using Propostion 3.7 and Lemma 2.2 it is easy to see that any function $\varphi$ which satisfies the conclusion of either part of Propostion 3.6 must also satisfy the hypothesis.  It is also easy to find the functions $\varphi$ such that $f$ operator convex implies $\varphi\circ f$ strongly operator convex.  These are the operator monotone functions whose natural domain is unbounded to the left and which are non-negative on their natural domain.  An equivalent condition is that $\varphi$ must be both operator monotone and strongly operator convex.
\end{Remark}
\section{Some methods to construct new functions from old ones}

In \cite{U-2010} it has been shown that $\int f(t)dt $ is operator convex if 
$f(t)$ is operator monotone.  

\begin{Proposition}\label{Th:7} Let $g(t)$ be a strongly operator convex function on $J$.  
Then $\int g(t) dt $ is an operator monotone function. But the converse implication 
does not hold. 
\end{Proposition}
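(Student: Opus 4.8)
The plan is to prove the forward implication through the paper's main theorem and to dispose of the converse with a single explicit example. Write $F$ for any antiderivative of $g$; since adding a constant alters neither the conclusion nor operator monotonicity, $F$ is well defined for our purposes, and since a continuous function lies in ${\bf P}(J)$ exactly when its restriction to $J^{\circ}$ does, I may assume $J$ is open. By Theorem \ref{Th:3} applied to $F$ (which is $C^1$), it suffices to show that $K_F(t,t_0)$ is strongly operator convex for one fixed $t_0\in J$. The computation I would start from is the averaging identity
\[
K_F(t,t_0)=\frac{F(t)-F(t_0)}{t-t_0}=\frac{1}{t-t_0}\int_{t_0}^{t}g(s)\,ds=\int_0^1 g\bigl((1-u)t_0+ut\bigr)\,du,
\]
obtained by the substitution $s=(1-u)t_0+ut$ and valid also at $t=t_0$ in the limit.

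Next I would check that each integrand $g_u(t):=g\bigl((1-u)t_0+ut\bigr)$ is strongly operator convex. For $u=0$ it is a positive constant, hence in ${\bf SOC}$; for $0<u\le 1$ the map $t\mapsto (1-u)t_0+ut$ is an increasing affine map of $J^{\circ}$ into itself, and operator concavity is preserved under such reparametrizations, so by Lemma \ref{Le:1} (together with the fact that $g>0$ when $g\not\equiv 0$) the reciprocal $1/g_u=(1/g)\circ\phi$ is operator concave and $g_u$ is again strongly operator convex. Since ${\bf SOC}(J^{\circ})$ is a convex cone—closed under sums and nonnegative scalar multiples directly from the defining inequality $Pg(PAP)P\le g(A)$—each Riemann sum $\tfrac1n\sum_{k=1}^{n}g_{k/n}$ is strongly operator convex; as $u\mapsto g_u(t)$ is continuous on the compact parameter interval, these sums converge pointwise to $\int_0^1 g_u\,du=K_F(t,t_0)$, which is then strongly operator convex by Corollary \ref{Cor:3-2}. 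Theorem \ref{Th:3} then yields that $F$ is operator monotone on $J^{\circ}$, and continuity extends this to $J$.

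For the converse I would exhibit one counterexample on $(0,\infty)$. Take $g(t)=1/t^2$; an antiderivative is $F(t)=-1/t$, which is operator monotone on $(0,\infty)$ (indeed $1/t$ is operator decreasing, as recorded in Section 2). Yet $g$ is not strongly operator convex: by Lemma \ref{Le:1} this would force $1/g(t)=t^2$ to be operator concave, contradicting the fact that $t^2$ is a non-affine operator convex function. Hence operator monotonicity of $\int g\,dt$ does not imply $g\in{\bf SOC}(J)$.

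The routine but essential technical points are the affine-reparametrization invariance of operator concavity and the legitimacy of passing to the pointwise limit of the Riemann sums, both of which rest only on continuity of $g$ and Corollary \ref{Cor:3-2}; I expect no real difficulty there. The one genuinely non-mechanical step is the converse, where the work lies in recognizing $1/t^2$ as the derivative of the operator monotone function $-1/t$ while itself failing to be strongly operator convex—the squared denominators produced by differentiating Pick-type functions are precisely what push the reciprocal out of the operator concave class.
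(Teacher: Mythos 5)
Your proof is correct, but the forward implication takes a genuinely different route from the paper's. The paper invokes Corollary \ref{Cor:3-3}: every $g\in{\bf SOC}(J)$ has the form $\alpha+\int\frac{1}{t-x}\,d\nu_-(x)+\int\frac{1}{x-t}\,d\nu_+(x)$ with $\alpha\ge 0$, so it suffices to check that antiderivatives of the elementary kernels, namely $\log(t-x)$ for $x$ to the left of $J$ and $-\log(x-t)$ for $x$ to the right (plus $\alpha t$), are operator monotone, which is classical. You instead apply Theorem \ref{Th:3} to the antiderivative $F$ and verify $K_F(\cdot,t_0)\in{\bf SOC}(J^{\circ})$ via the averaging identity $K_F(t,t_0)=\int_0^1 g((1-u)t_0+ut)\,du$, combined with three structural facts: invariance of ${\bf SOC}$ under increasing affine substitutions (which you correctly route through Lemma \ref{Le:1}, since the naive compression argument does not directly apply to $g\circ\phi$), the convex-cone property of ${\bf SOC}$, and closure under pointwise limits (Corollary \ref{Cor:3-2}). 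Both arguments rest only on results established before Proposition \ref{Th:7}, so neither is circular. What each buys: the paper's proof is shorter and concrete, but its term-by-term integration of the representation (``easily verified'') quietly requires normalizing the logarithmic integrals so they converge against $\nu_\pm$; your proof avoids integral representations and all measure-theoretic bookkeeping, at the cost of assembling the closure properties of ${\bf SOC}$, and it handles finite and infinite intervals uniformly. Your counterexample for the converse is identical to the paper's ($g(t)=1/t^2$ on $(0,\infty)$ with $\int g\,dt=-1/t$), differing only in the justification that $1/t^2\notin{\bf SOC}$: you observe via Lemma \ref{Le:1} that $t^2$ is not operator concave, while the paper observes via Proposition \ref{Th:2}(ii) that $1/t^2$ is not operator decreasing; both are valid.
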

\begin{proof} By Corollary\;\ref{Cor:3-3} it is sufficient to show that $\int\frac{1}{x-t}dt$
 is operator monotone for  $x$ to the right of $J$ and 
$\int\frac{1}{t-x}dt$ is operator monotone for $x$ to the left of $J$.  
These facts are easily verified.   
 We give a counterexample for the converse implication. 
$\int \frac{1}{t^2} dt = - \frac{1}{t}$ is operator monotone on $(0, \infty)$, but 
$ \frac{1}{t^2}$ is not strongly operator convex, because it is not operator decreasing 
there.
\end{proof}

\begin{Proposition}\label{Th:9}
Let $h\ne 0$ be in ${\bf SOC}(0, \infty)$. Then 
\begin{itemize}
\item[(i)] $\frac{1}{th(t)}\in {\bf SOC}(0, \infty)$.
\item[(ii)] $th(t)\in {\bf P}(0, \infty)$.
\item[(iii)] $t/h(t)$ is operator convex on $[0, \infty)$.
\item[(iv)]
Let $\phi, \psi$ be  positive operator monotone functions on $(0, \infty)$. 
Then  $\phi(h)$, $\psi(\frac{1}{th(t)})$ and $\phi (h) \psi (\frac{1}{th(t)})$ 
are all in ${\bf SOC}(0, \infty)$.
\end{itemize}
\end{Proposition}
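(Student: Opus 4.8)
The plan is to obtain (i) and (ii) from the integral representation of $h$, to get (iii) from the operator monotonicity of $1/h$, and then to build up to the product in (iv), which is the only real difficulty.

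First I would prove (ii). We may assume $h$ is non-constant (a positive constant $h$ makes $th(t)$ a positive multiple of the identity). Then Proposition~\ref{Th:2}(ii) gives
$$h(t)=h(\infty)+\int_{(-\infty,0]}\frac{1}{t-x}\,d\nu_-(x),\qquad h(\infty)\ge 0,$$
so that
$$th(t)=h(\infty)\,t+\int_{(-\infty,0]}\frac{t}{t-x}\,d\nu_-(x).$$
For each $x\le 0$ the fractional linear function $t\mapsto t/(t-x)=1+x/(t-x)$ is operator monotone on $(0,\infty)$ (its pole lies at $x\le 0$, outside the interval); adding the operator monotone term $h(\infty)t$ gives $th\in\mathbf{P}(0,\infty)$, which is (ii). For (i) I would use Lemma~\ref{Le:1}: an operator monotone function on $(0,\infty)$ is operator concave, so $th$ is a positive operator concave function, and therefore its reciprocal $1/(th)$ lies in $\mathbf{SOC}(0,\infty)$.

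For (iii), note that $1/h$ is operator monotone on $(0,\infty)$ (it is operator concave by Lemma~\ref{Le:1} and operator increasing because $h$ is operator decreasing), with $1/h(0+)$ finite and $\ge 0$, so it extends continuously to $[0,\infty)$. Writing $f:=1/h$ and inserting the representation \eqref{eq:1-0} for $f$, whose measure is supported in $(-\infty,0]$, I would multiply by $t$ and verify operator convexity term by term: $\beta t^{2}$ is operator convex, $\alpha t$ and $-t/(x-t_0)$ are affine, and $t/(x-t)=-1+|x|/(t-x)$ is operator convex for $x\le 0$. Hence $t/h=tf$ is operator convex on $(0,\infty)$, and since it is continuous and vanishes at $0$ it is operator convex on $[0,\infty)$.

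In (iv) the individual claims follow from Proposition~\ref{Th:4}(i): extending $\phi,\psi$ continuously to $[0,\infty)$ we have $\phi(0),\psi(0)\ge 0$, and since $h$ and (by (i)) $1/(th)$ are strongly operator convex with ranges in $(0,\infty)$, that proposition gives $\phi(h),\ \psi(1/(th))\in\mathbf{SOC}(0,\infty)$. The product is the essential obstacle, because $\mathbf{SOC}$ is \emph{not} closed under multiplication (for example $(1/t)^2=1/t^2$ is not strongly operator convex). The device I would use is the elementary fact that the reciprocal of a positive operator monotone function on $(0,\infty)$ lies in $\mathbf{SOC}(0,\infty)$: such a reciprocal is positive and operator decreasing, so Proposition~\ref{Th:2}(ii) applies. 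Feeding the integral representations of $\phi$ and $\psi$ into the product writes $\phi(h)$ as a positive combination of $1$, $h$, $h/(h+\lambda)$ and $\psi(1/(th))$ as a positive combination of $1$, $1/(th)$, $1/(1+\mu th)$; multiplying out, $\phi(h)\psi(1/(th))$ becomes a positive combination (an integral in $\lambda,\mu$) of the terms
$$h,\quad \frac{h}{h+\lambda},\quad \frac1{th},\quad \frac1{1+\mu th},\quad \frac1t,\quad \frac{h}{1+\mu th},\quad \frac{1}{t(h+\lambda)},\quad \frac{h}{(h+\lambda)(1+\mu th)}$$
together with constants. The first five (and the constants) are already known to be strongly operator convex, while the last three equal
$$\frac{1}{1/h+\mu t},\qquad \frac{1}{th+\lambda t},\qquad \frac{1}{\,1+\lambda\mu t+\mu\,th+\lambda/h\,},$$
each a reciprocal of a nonnegative combination of the operator monotone functions $1,\ t,\ th,\ 1/h$ (operator monotonicity of $th$ and $1/h$ coming from (ii) and Lemma~\ref{Le:1}), hence strongly operator convex by the device above. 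Since $\mathbf{SOC}(0,\infty)$ is a convex cone closed under pointwise limits (Corollary~\ref{Cor:3-2}), the positive combination $\phi(h)\psi(1/(th))$ is strongly operator convex. The crux is precisely this product; its resolution is to break it into reciprocals of sums of operator monotone functions, the decisive inputs being part (ii) (that $th\in\mathbf{P}(0,\infty)$) and the cone and closure properties in Corollary~\ref{Cor:3-2}.
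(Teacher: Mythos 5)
Your proposal is correct, but it takes a genuinely different route from the paper, most significantly in part (iv). For (i)--(ii) the paper argues in the opposite order: it applies Theorem \ref{Th:3} to the operator monotone function $1/h$ at $t_0=0$ (together with Corollary \ref{Cor:3-3}) to conclude $\frac{1}{th(t)}\in{\bf SOC}(0,\infty)$, and then deduces (ii) from (i); you instead prove (ii) directly from the integral representation in Proposition \ref{Th:2}(ii) and deduce (i) from (ii) via Lemma \ref{Le:1}, so your argument for (i)--(ii) bypasses Theorem \ref{Th:3} entirely. For (iii) the paper simply notes $1/h = K_{t/h}(t,0)$ and invokes the one-point criterion for operator convexity from \cite{U-2010}, while you verify convexity term by term in the representation of $1/h$; both are valid, the paper's being shorter. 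The essential divergence is the product in (iv): the paper substitutes $t\mapsto 1/t$ and cites an external result (Lemma 2.1 of \cite{U-F}, cf.\ Proposition 7.16 of \cite{S-S-V}) on products $\phi(u)\psi(v)$ with $u,v$ positive operator monotone, whereas you re-prove exactly the special case needed inside the paper's own framework: expand $\phi$ and $\psi$ by their integral representations, multiply out, and recognize each cross term as either already known to be in ${\bf SOC}(0,\infty)$ or the reciprocal of a non-negative combination of the operator monotone functions $1,\,t,\,th,\,1/h$, then finish with the cone and pointwise-limit closure properties (Corollary \ref{Cor:3-2}). Your route is longer but self-contained, which fits the paper's stated aim of a treatment using only operator inequalities; the paper's is shorter but leans on a nontrivial external lemma. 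The one detail you gloss over is the passage from finite non-negative combinations to the integrals in $\lambda,\mu$ (compact truncation, Riemann sums, and pointwise limits), but this is routine and is at the same level of detail the paper itself adopts, e.g.\ in the proofs of Proposition \ref{Th:8}(i) and Proposition \ref{Th:4}(i).
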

\begin{proof}
Since $1/h(t)$ is positive and operator monotone, $0\leqq \frac{1}{h(+0)}<\infty$ and 
$\frac{1}{t} (\frac{1}{h(t)} - \frac{1}{h(+0)})\in {\bf SOC}(0, \infty) $ 
by Theorem \;\ref{Th:1} and Corollary\;\ref{Cor:3-3}. 
Because $\frac{1}{t}\frac{1}{h(+0)}\in {\bf SOC}(0, \infty)$, we obtain 
$\frac{1}{t} \frac{1}{h(t)}\in {\bf SOC}(0, \infty) $, namely (i); and hence we get (ii) as well. 
Since $1/h(t)\in {\bf P}[0, \infty)$ and $ 1/h(t)=K_{t/h}(t, 0)$, $\frac{t}{h(t)}$ 
is operator convex on $[0, \infty)$. This is (iii). 
We finally show (iv). Since $h$ and $\frac{1}{th(t)}$ are operator decreasing, so are 
 $\phi(h)$ and $\psi(\frac{1}{th(t)})$. We have only to show 
$\phi (h) \psi (\frac{1}{th(t)})$ is operator decreasing. This is equivalent to 
the operator monotonicity of  $\phi (h(1/t)) \psi (\frac{1}{h(1/t) 1/t})$, 
which follows from  Lemma 2.1 of \cite{U-F} (cf. Prop. 7.16 of \cite{S-S-V}) 
since  $h(1/t)$ and $\frac{1}{h(1/t) 1/t}$ are both positive operator monotone 
functions on $(0, \infty)$. 
\end{proof}

We now proceed with a main construction for making new operator monotone functions from old ones. Start with a non-constant operator monotone function $f_0$ on $J$. Choose $t_0$ in $J$ and let $f_1(t)=(f_0(t)-f_0(t_0))/(t-t_0)$. Since $f_1$ is a non-zero strongly operator convex function, we can define a (strictly negative) operator convex function $f_2$ by setting $f_2=-1/f_1$. Then choose $t_1$ in the domain of $f_2$ and let $f_3(t)=(f_2(t)-f_2(t_1))/(t-t_1)$, a new operator monotone function.
\begin{Exm}\rm
Let $f_0(t) = \tan t$ as in Example 3.1 and take $t_0=t_1=0$.  Then $f_3(t)=(\tan t-t)/t\tan t$.  Although one could presumably verify directly that $f_3$ is a Pick function, the fact that our construction can easily produce $f_3$ and guarantee that it is operator monotone may be interesting.
For another example take $f_0(t)=t^\alpha$, $t\ge 0$, where $0<\alpha<1$, $t_0=1$ and $t_1=0$. Then $f_3(t)=(t^{\alpha-1}-1)/(t^\alpha-1)$ on $(0,\infty)$. The direct proof that $f_3$ is operator monotone, by checking that it is a Pick function, involves a somewhat non-trivial calculus problem. 
\end{Exm}

The process can be continued to obtain an infinite sequence of functions such that each $f_{3n}$ is operator monotone, each $f_{3n+1}$ is strongly operator convex, and each $f_{3n+2}$ is operator convex, provided that no $f_{3n+1}$ is 0. It is not hard to see that this will be the case unless $f_0$ is rational. If $f_0$ is rational, then the degree of $f_{3n+3}$, defined as the maximum of the degrees of the numerator and denominator, will be one less than the degree of $f_{3n}$, and the process will eventually terminate. The construction depends on an infinite sequence $\{t_i\}$ of points used in the transitions from $f_{3n}$ to $f_{3n+1}$ and from $f_{3n+2}$ to $f_{3n+3}$. The requirements for this sequence are a little complicated.  Since a negative convex function always has a finite limit at any finite endpoint of its domain, the functions $f_{3n+2}$ may always be considered to be defined on $\bar{J}$.  Thus $t_i$ for $i$ odd, which is used in the transition from $f_{3n+2}$ to $f_{3n+3}$, is always allowed to be an endpoint of $J$.  But if such a $t_i$ is an endpoint, then in general $t_{i+1}$ cannot be the same endpoint.

Another construction 
starts with an operator monotone function $f^*_0$. Then the strongly operator convex function $f^*_1$ is defined, as before, by $f^*_1(t)=(f^*_0(t)-f^*_0(t_0))/(t-t_0)$. But now we define a new operator monotone function $f^*_2$ by $f^*_2(t)=(f^*_1(t)-f^*_1(t_1))/(t-t_1)$. We can continue the process to obtain an infinite sequence which alternates between operator monotone functions and strongly operator convex functions. However, the theory of strongly operator convex functions is not really being used, since we need only the fact that $f^*_{2n+1}$ is operator convex.  Also recall that since the measures, as in (1) and Corollary 3.4, for $f^*_{n+1}$ are obtained from those for $f^*_n$ by multiplying by $1/|x-t_n|$, all of the measures will be finite for $n\ge 2$, and they will have increasingly many finite moments as $n$ increases. Thus the operator monotone functions $f^*_{2n}$ will not be ``general'' but will have special properties. Finally, there is one more reason why we think that the $\{f^*_n\}$ process is less interesting than the original one. Suppose $f$ is a strictly positive continuous function on $J$ and $g=-1/f$. Then $g$ operator convex implies $f$ operator convex but the converse is false. Therefore if $h_1(t)=(g(t)-g(t_1))/(t-t_1)$ and $h_2(t)=(f(t)-f(t_1))/(t-t_1)$, then $h_1$ operator monotone implies $h_2$ operator monotone but not conversely. Therefore it is presumably more interesting to know that $h_1$ is operator monotone than to know that $h_2$ is operator monotone. If $f$ is the function $f_1$ or $f^*_1$ in the above constructions, then the first construction produces $h_1$ and the second produces $h_2$.
\begin{Exm}\rm
If $f^*_0(t) =\tan t$ and $t_0=t_1=0$, then $f^*_2(t)=(\tan t-t)/t^2$, and if 
$f^*_0(t)=t^\alpha$, $t_0=1$, and $t_1=0$. Then $f^*_2(t)=(t^{\alpha-1}-1)/(t-1)$ on $(0,\infty)$.  Although we believe that this process is less interesting than the previous one, this example suggests that it may still be interesting.
\end{Exm}

It is also possible to run the first process backwards, though this is slightly problematical. Thus start with an operator monotone function $f_0$ on $J$ and let $f_{-1}(t)=f_0(t)(t-t_0)+c_0$. It is necessary to choose $c_0$ small enough that $f_{-1}(t)<0$, $\forall t$, and this is not always possible. But it is possible if both $J$ and $f_0$ are bounded. So if necessary we can restrict $f_0$ to a smaller interval in order to construct a suitable operator convex function $f_{-1}$. Then let $f_{-2}=-1/f_{-1}$, and take $f_{-3}(t)=f_{-2}(t)(t-t_1)+c_1$. So $f_{-3}$ is a new operator monotone function.  It is permissible for some of the $t_i$'s to be endpoints of $J$ even if those endpoints are not in $J$. But if we want really to be running the original process backwards, it is necessary that the singletons $\{t_i\}$ have measure 0.

It might be interesting to investigate the behavior of $\{f_n\}, \{f^*_n\}$, and $\{f_{-n}\}$ as $n\to\infty$. We have not attempted this, since we have no expertise in dynamical systems, but the situation for $\{f^*_n\}$ is probably not very difficult. For $n\ge 2$ the $f^*_n$'s are determined completely by the measures appearing in (1) or Corollary 3.4 (the $1/(x-t_0)$ term in (1) can be omitted when $n\ge 2$), and it is easy to calculate the measures in terms of the measure for $f^*_0$ and the $t_i$'s. If the length of $J$ is 2 and we choose $t_i$ to be the midpoint of $J$ for all $i$, then $\{f^*_{2n}\}$ and $\{f^*_{2n-1}\}$ will have possibly non-zero limits as $n\to\infty$.

There are other ways to derive new operator functions from old ones.  One possibility is to start with an operator monotone function $f$, define an operator convex function $g$ by $g(t) = (t-t_0)f(t)$, and then a new operator monotone function $h$ with $h(t) =(g(t)-g(t_1))/(t-t_1)$.  This technique was used by Hansen and Pedersen to derive \cite[Theorem 3.9]{H-P}.  One could also use the same technique to derive a new strongly operator convex function $h$ from a given strongly operator convex function $f$.  This step could then be inserted into one of the processes given above.  Or, in the main process, instead of defining $f_{3n+2}=-1/f_{3n+1}$, one  could take $f_{3n+2}=\varphi\circ f_{3n+1}$, where $\varphi$ satisfies part (ii) of Proposition 3.6.  Clearly there are so many ways to move from one of the three classes of functions to another (or to the same class) that it would be pointless to try to list them all.  
 

\end{document}